\numberwithin{equation}{section}
\newtheorem{theorem}{Theorem}[section]
\newtheorem{definition}[theorem]{Definition}
\newtheorem{lemma}[theorem]{Lemma}
\newtheorem{proposition}[theorem]{Proposition}
\newtheorem{corollary}[theorem]{Corollary}
\theoremstyle{remark}
\newtheorem{remark}[theorem]{Remark}
\newtheorem{example}[theorem]{Example}
\theoremstyle{plain}
\providecommand{\loc}{{\ensuremath{\mathrm{loc}}}}
\newcommand{\W}{\mathcal{W}^{\alpha}_{\alpha_1,\alpha_2}}
\newcommand{\w}{\textbf{\textit{w}}}
\newcommand{\px}{{p(\cdot)}}
\newcommand{\qx}{{q(\cdot)}}
\newcommand{\qzx}{{q_0(\cdot)}}
\newcommand{\qumx}{{q_1(\cdot)}}
\newcommand{\A}{A^{\textbf{\textit{w}}}_{\px,\qx}}
\newcommand{\B}{B^{\textbf{\textit{w}}}_{\px,\qx}}
\newcommand{\F}{F^{\textbf{\textit{w}}}_{\px,\qx}}
\newcommand{\Nz}{\ensuremath{\mathbb{N}_0}}
\newcommand{\R}{\mathbb{R}}
\newcommand{\N}{\mathbb{N}}
\newcommand{\Z}{\mathbb{Z}}
\newcommand{\cS}{\mathcal{S}}
\newcommand{\Rn}{{\mathbb{R}^n}}
\newcommand{\Zn}{{\mathbb{Z}^n}}
\DeclareMathOperator{\supp}{supp}
\def\esssup{\operatornamewithlimits{ess\,sup}}
\def\essinf{\operatornamewithlimits{ess\,inf}}
\newcommand{\PPlog}{\mathcal{P}^{\log}(\Rn)}
\newcommand{\PP}{\mathcal{P}(\Rn)}
\begin{document}

\title{On $2$-microlocal spaces with all exponents variable}

\author[A. Almeida]{Alexandre Almeida$^{*}$}
\address{Center for R\&D in Mathematics and Applications, Department of Mathematics, University of Aveiro, 3810-193 Aveiro, Portugal}
\email{jaralmeida@ua.pt}

\author[A. Caetano]{Ant\'{o}nio Caetano}
\address{Center for R\&D in Mathematics and Applications, Department of Mathematics, University of Aveiro, 3810-193 Aveiro, Portugal}
\email{acaetano@ua.pt}

\thanks{$^*$ Corresponding author.}
\thanks{This work was partially supported by Portuguese funds through CIDMA (Center for Research and Development in Mathematics and Applications) and FCT (Foundation for Science and Technology) within project UID/MAT/04106/2013.}
\thanks{\copyright 2016. Licensed under the CC-BY-NC-ND 4.0 license http://creativecommons.org/licenses/by-nc-nd/4.0/}
\thanks{Formal publication: http://dx.doi.org/10.1016/j.na.2016.01.016}

\date{\today}

\subjclass[2010]{46E35, 46E30, 42B25}

\keywords{Variable exponents, Besov spaces,
Triebel--Lizorkin spaces, 2-microlocal spaces, Peetre maximal functions, lifting property, Fourier multipliers, embeddings}

\begin{abstract}
In this paper we study various key properties for $2$-microlocal Besov and Triebel--Lizorkin spaces with all exponents variable, including the lifting property, embeddings and Fourier multipliers. We also clarify and improve some statements recently published.
\end{abstract}

\maketitle


\section{Introduction}\label{sec:intro}

Function spaces with variable integrability already appeared in the work of
Orlicz \cite{Orl31}, although the modern development started with the
paper \cite{KR91} of Kov\'{a}{\v c}ik and R\'{a}kosn\'{\i}k. Corresponding
PDE with non-standard growth have been studied approximately since the same time.
For an overview we refer to the monographs \cite{C-UF13,DHHR11} and the survey \cite{HarHLN10}.
Apart from interesting theoretical investigations, the motivation to study
such function spaces comes from applications to fluid dynamics
\cite{Ruz00}, image processing \cite{CheLR06,HarHLT13,Tiir14},
PDE and the calculus of variations, see for example \cite{AceM01,Fan07,LZhang13}.

Some ten years ago, a further step was taken by Almeida and Samko \cite{AlmS06} and
Gurka, Harjulehto and Nekvinda \cite{GHN} by introducing variable exponent Bessel potential spaces $\mathcal{L}^{s,\px}$ (with constant $s$). As in the classical case, these spaces coincide with the Lebesgue/Sobolev spaces for integer $s$.
Later Xu \cite{Xu08a} considered Besov $B^{s}_{\px,q}$ and Triebel--Lizorkin
$F^{s}_{\px,q}$ spaces with variable $p$, but fixed $q$ and $s$.

In a different context, Leopold \cite{Leo89a, Leo91} considered Besov type spaces with the smoothness
index determined by certain symbols of hypoelliptic pseudo-differential operators. Special choices of such symbols lead to spaces $B^{s(\cdot)}_{p,p}$ of variable smoothness. More general function spaces with variable smoothness $B^{s(\cdot)}_{p,q}$ and $F^{s(\cdot)}_{p,q}$ were explicitly studied by
Besov \cite{Bes03}, including characterizations by differences.

More recently all the above mentioned spaces were integrated into larger scales similarly with the full classical Besov and Triebel--Lizorkin scales with constant exponents. However, such extension requires all
the indices to be variable. Such three-index generalization was done by Diening, H\"ast\"o and Roudenko
\cite{DieHR09} for Triebel--Lizorkin spaces $F^{s(\cdot)}_{\px,\qx}$, and by Almeida and H\"ast\"o \cite{AlmH10} for Besov spaces
$B^{s(\cdot)}_{\px,\qx}$. This full extension led to immediate gains, for example with the study of traces where the integrability and smoothness indices interact, see \cite{DieHR09,AlmH14}, and it also provided an important unification. Indeed, when $s\in[0,\infty)$ and $p\in\PPlog$ is bounded away from $1$ and $\infty$ then $F^s_{\px,2}=\mathcal{L}^{s,\px}$ (\cite{DieHR09}) are Bessel potential spaces, which in turn are Sobolev spaces if $s$ is integer (\cite{AlmS06}). On the other hand, the variable Besov scale above includes variable order H\"older-Zygmund spaces as special cases (cf. \cite[Theorem~7.2]{AlmH10}).

It happens that the smoothness parameter can be generalized in different directions. In the so-called $2$-microlocal spaces $B^\w_{p,q}$ and $F^\w_{p,q}$ the smoothness is measured by a certain weight sequence $\w=(w_j)_{j\in\Nz}$, which is rich enough in order to frame spaces with variable smoothness and spaces with generalized smoothness (see \cite{FarL06}). The $2$-microlocal spaces already appeared in the works of Peetre \cite{Pee75} and Bony \cite{Bo84}. Later Jaffard and Meyer \cite{Jaf91}, \cite{JafMey96}, and Lévy Véhel and Seuret \cite{LevSeu03} have also used such spaces in connection with the study of regularity properties of functions. Function spaces with constant integrability defined by more general microlocal weights were also studied by Andersson \cite{And97}, Besov \cite{Bes03}, Moritoh and Yamada \cite{MYam04} and Kempka \cite{Kem08}.

The generalization mixing up variable integrability and $2$-microlocal weights was done by Kempka \cite{Kem09} providing a unification for many function spaces studied so far. However, in the case of Besov spaces the fine index $q$ was still kept fixed.

In this paper we deal with the general Besov and Triebel-Lizorkin scales $B^w_{\px,\qx}$ and $F^w_{\px,\qx}$ on $\Rn$ with all exponents variable. After some necessary background material (Section~\ref{sec:prelim}), we discuss in Section~\ref{sec:PeetreMaxFunc} the characterization in terms of Peetre maximal functions (Theorem~\ref{thm:Peetre}) and, as a consequence, the independence of the spaces from the admissible system taken (Corollary~\ref{cor:independent}). Although such statements have already been presented by Kempka and Vyb\'{\i}ral in \cite{KemV12}, their proofs contain some unclear points, see the details in the discussions after Theorem~\ref{thm:Peetre} and Corollary~\ref{cor:means} below.

In the remaining sections we prove some key properties for both scales $B^w_{\px,\qx}$ and $F^w_{\px,\qx}$: the lifting property in Section~\ref{sec:lifting}; embeddings in Section~\ref{sec:basic-embed}; finally, Fourier multipliers in Section~\ref{sec:Fmultipliers}. For other key properties, like atomic and molecular representations and Sobolev type embeddings, we refer to our paper \cite{AlmC15b}.

We notice that recently in \cite{LSUYY13} a very general framework was proposed for studying function spaces and proving similar properties for the related spaces. Although the framework suggested over there is very general in some aspects, it does not include Besov and Triebel-Lizorkin spaces with variable $q$. This fact is very relevant, since the mixed sequences spaces behind do not share some fundamental properties as in the constant exponent situation.


\section{Preliminaries}\label{sec:prelim}

As usual, we denote by $\mathbb{R}^{n}$ the $n$-dimensional real
Euclidean space, $ \N$ the collection of all natural numbers and
$\N_{0}= \N\cup \{0\}$. By $\Zn$ we denote the lattice of all points in $\Rn$ with integer components. If $r$ is a real number then $r_+:=\max\{r,0\}$.
We write $B(x,r)$ for the open ball in
$\mathbb{R}^{n}$ centered at $x\in \mathbb{R}^{n}$ with radius $r>0$.
We use $c$ as a generic positive constant, i.e.\ a constant whose
value may change with each appearance. The expression $f
\lesssim g$ means that $f\leq c\,g$ for some independent constant
$c$, and $f\approx g$ means $f \lesssim g \lesssim f$.

The notation $X\hookrightarrow Y$ stands for continuous embeddings
from $X$ into $Y$, where $X$ and $Y$ are quasi-normed spaces. If
$E\subset {\mathbb{R}^{n}}$ is a  measurable set, then $|E|$ stands
for its (Lebesgue) measure and $\chi_{E}$ denotes its characteristic
function. By $\supp f$ we denote the support of the function $f$.

The set $\cS$ denotes the usual Schwartz class of infinitely differentiable
rapidly decreasing complex-valued functions and $\cS'$
denotes the dual space of tempered distributions. The Fourier
transform of a tempered distribution $f$ is denoted by $\hat f$ while its inverse transform is denoted by $f^\vee$.

\subsection{Variable exponents}

By $\PP$ we denote the set of all measurable functions $p:\Rn
\rightarrow (0,\infty]$ (called \textit{variable exponents}) which
are essentially bounded away from zero.  For $E\subset \Rn$
and $p\in \PP$ we denote $p_E^+ =\esssup_E p(x)$ and
$p_E^-=\essinf_E p(x)$. For simplicity we use the abbreviations $p^+=p_\Rn^+$ and
$p^-=p_\Rn^-$.

The \emph{variable exponent Lebesgue space} $L_\px=L_{\px}(\Rn)$ is the
class of all (complex or extended real-valued) measurable functions $f$ (on $\Rn$) such that
\[
\varrho_{\px}(f/\lambda):=\int_\Rn \phi_{p(x)}\left(\frac{|f(x)|}{\lambda}\right)\, dx
\]
is finite for some $\lambda>0$, where
\[
\phi_p(t) :=
\begin{cases}
t^p & \text{ if } p\in (0,\infty), \\
0 & \text{ if } p=\infty \text{ and } t\in [0,1], \\
\infty & \text{ if } p=\infty \text{ and } t\in(1,\infty]. \\
\end{cases}
\]

It is known that $\varrho_\px$ defines a semimodular (on the vector space consisting of all measurable functions on $\Rn$ which are finite a.e.), and that $L_{\px}$ becomes a quasi-Banach space with respect to the quasi-norm
\begin{align*}
    \| f|L_\px\| &:= \inf \left\{ \lambda>0 : \varrho_{\px}\left(f/\lambda\right) \leq 1\right\}.
\end{align*}
This functional defines a norm when $p^-\geq 1$.  If $p(x)\equiv p$ is constant, then $L_{\px}=L_{p}$ is the classical Lebesgue space.

It is worth noting that $L_{\px}$ has the lattice property and that the assertions $f\in L_\px$ and $\|f\,|\, L_\px\|<\infty$ are equivalent for any (complex or extended real-valued) measurable function $f$ (assuming the usual convention $\inf \varnothing=\infty$). The fundamental properties of the spaces $L_\px$, at least in the case $p^-\geq 1$, can be found in \cite{KR91} and in the recent monographs \cite{DHHR11}, \cite{C-UF13}. The definition above of $L_{\px}$ using the semimodular $\varrho_\px$ is taken from \cite{DHHR11}.

For any $p\in\PP$ we have
\[
\| f\,|\,L_\px\|^r = \left\| |f|^r | L_{\frac{\px}{r}}\right\|\,, \ \ \ \ r\in(0,\infty),
\]
and
\[
\| f+g\,|\,L_\px\| \leq \max\left\{1,2^{\frac{1}{p^-}-1}\right\} \left( \| f\,|\,L_\px\| + \| g\,|\,L_\px\|\right).
\]

An useful property (that we shall call the \textit{unit ball
property}) is that $\rho_\px(f) \leq 1$
if and only if $\| f\,|L_\px\| \leq 1$ (\cite[Lemma~3.2.4]{DHHR11}). An interesting variant of this is the following estimate
\begin{equation}\label{Lp-norm-mod}
\min \left\{ \varrho_{\px} (f)^{\frac{1}{p^-}}, \varrho_{\px} (f)^{\frac{1}{p^+}} \right\} \le \|f\,|\, L_\px\| \le \max \left\{ \varrho_{\px} (f)^{\frac{1}{p^-}}, \varrho_{\px} (f)^{\frac{1}{p^+}} \right\}
\end{equation}
for $p\in\PP$ with $p^-<\infty$, and $\varrho_{\px} (f)>0$ or $p^+<\infty$. It is proved in \cite[Lemma~3.2.5]{DHHR11} for the case $p^-\ge 1$, but it is not hard to check that this property remains valid in the case $p^- < 1$. This property is clear for constant exponents due to the
obvious relation between the quasi-norm and the semimodular in that case.

For variable exponents, H\"older's inequality holds in the form
\[
\| f\,g\,|L_1\| \leq 2\,\| f\,|L_\px\|  \| g\,|L_{p'(\cdot)}\|
\]
for $p\in\PP$ with $p^-\ge 1$, where $p'$ denotes the conjugate exponent of $p$ defined pointwisely by $\tfrac{1}{p(x)}+\tfrac{1}{p'(x)}=1, \ \ x\in\Rn$.

From the spaces $L_\px$ we can also define \emph{variable exponent Sobolev spaces} $W^{k,\px}$ in the usual way (see \cite{DHHR11}, \cite{C-UF13} and the references therein).

In general we need to assume some regularity on the exponents in order to develop a consistent theory of variable function spaces. We recall here some classes which are nowadays standard in this setting.

We say that a continuous function $g\,:\, \Rn\to \R$ is {\em locally $\log$-H\"{o}lder
continuous}, abbreviated $g \in C^{\log}_\loc$, if there exists $c_{\log}>0$ such that
  \begin{align*}
    |g(x)-g(y)| \leq \frac{c_{\log}}{\log (e + 1/|x-y|)}
  \end{align*}
for all $x,y\in\Rn$. The function $g$ is said to be {\em globally $\log$-H\"{o}lder continuous},
abbreviated $g \in C^{\log}$, if it is locally $\log$-H\"{o}lder
continuous and there exists $g_\infty \in \R$ and $C_{\log}>0$ such that
\begin{align*}
|g(x) - g_\infty| &\leq \frac{C_{\log}}{\log(e
+ |x|)}
\end{align*}
for all $x \in \R^n$. The notation $\PPlog$ is used for those variable exponents $p\in \PP$ with $\frac1p \in C^{\log}$. We shall write $c_{\log}(g)$ when we need to use explicitly the constant involved in the local $\log$-Hölder continuity of $g$. Note that all (exponent) functions in $C^{\log}_{\loc}$ are bounded.

As regards the (quasi)norm of characteristic functions on cubes $Q$ (or balls) in $\Rn$, for exponents $p\in\PPlog$ we have
\begin{equation}\label{norm-charact-func-cubes}
\| \chi_Q\,| L_\px\| \approx |Q|^{\frac{1}{p(x)}} \ \ \ \ \textrm{if} \ \ \ |Q| \le 1 \ \ \ \textrm{and} \ \ x\in Q,
\end{equation}
and
\begin{equation*}
\| \chi_Q\,| L_\px\| \approx |Q|^{\frac{1}{p_\infty}} \ \ \ \ \textrm{if} \ \ |Q| \ge 1
\end{equation*}
(see \cite[Corollary~4.5.9]{DHHR11}),  using the shortcut $\tfrac{1}{p_\infty}$ for $\big(\tfrac{1}{p}\big)_\infty$.

\subsection{Variable exponent mixed sequence spaces}

To deal with variable exponent Besov and Triebel--Lizorkin scales we need to consider appropriate mixed sequences spaces. For $p,q\in\PP$ the \textit{mixed Lebesgue-sequence space} $L_\px(\ell_\qx)$ (\cite{DieHR09}) can be easily defined through the quasi-norm
\begin{equation} \label{def:lpq}
\|(f_\nu)_\nu\,| L_\px(\ell_\qx) \| := \big\|
\|(f_\nu(x))_\nu\,| \ell_{q(x)}\|\,| L_\px\big\|
\end{equation}
on sequences $(f_\nu)_{\nu\in\Nz}$ of complex or extended real-valued measurable functions on $\Rn$. This is a norm if $\min\{p^-,q^-\} \geq 1$.
Note that $\ell_{q(x)}$ is a standard discrete Lebesgue space (for each $x\in\Rn$), and that \eqref{def:lpq} is well defined since $q(x)$ does not depend on $\nu$ and the function $x\to \|(f_\nu(x))_\nu\,| \ell_{q(x)}\|$ is always measurable when $q\in\PP$.

The ``opposite'' case $\ell_\qx(L_\px)$ is not so easy to handle. For $p,q\in\PP$, the \textit{mixed sequence-Lebesgue space} $\ell_\qx(L_\px)$ consists of all sequences $(f_\nu)_{\nu\in\Nz}$ of (complex or extended real-valued) measurable functions (on $\Rn$) such that $\varrho_{\ell_\qx(L_\px)}\big( \tfrac{1}{\mu}(f_\nu)_\nu\big ) < \infty$ for some $\mu>0$, where
\begin{equation}\label{def:lpqmod}
\varrho_{\ell_\qx(L_\px)}\big( (f_\nu)_\nu\big ) := \sum_\nu
\inf\Big\{\lambda_\nu>0: \, \varrho_{\px}\Big(f_\nu
/\lambda_\nu^{\frac1{\qx}}\Big)\le 1 \Big\}.
\end{equation}
Note that if $q^+<\infty$ then \eqref{def:lpqmod} equals the more simple form
\begin{equation}\label{def:lpqmodsimple}
\varrho_{\ell_\qx(L_\px)}\big( (f_\nu)_\nu\big ) = \sum_\nu \Big\|
|f_\nu|^{\qx} | L_{\frac{\px}{\qx}}\Big\|.
\end{equation}
The space $\ell_\qx(L_\px)$ was introduced in \cite[Definition~3.1]{AlmH10} within the framework of the so called \emph{semimodular spaces}.  It is known (\cite{AlmH10}) that
\begin{equation*}\label{def:lpqnorm}
  \|(f_\nu)_\nu\,| \ell_\qx(L_\px)\| :=
  \inf\Big\{ \mu>0:\, \varrho_{\ell_\qx(L_\px)}\big( \tfrac1\mu  (f_\nu)_\nu\big ) \le 1\Big\}
\end{equation*}
defines a quasi-norm in $\ell_\qx(L_\px)$ for every $p,q\in\PP$ and that $\|\cdot\,| \ell_\qx(L_\px)\|$ is a
norm when $q\geq 1$ is constant and $p^-\geq 1$, or when
$\frac{1}{p(x)}+\frac{1}{q(x)} \leq 1$ for all $x\in\Rn$. More recently, it was
shown in \cite{KemV13} that it also becomes a norm if $1\leq q(x)\leq p(x)\leq \infty$. Contrarily to the situation when $q$ is constant, the expression above is not necessarily a norm when $\min\{p^-,q^-\} \geq 1$ (see \cite{KemV13} for an example showing that the triangle inequality may fail in this case).

It is worth noting that $\ell_\qx(L_\px)$ is a really iterated space when $q\in(0,\infty]$ is constant (\cite[Proposition~3.3]{AlmH10}), with
\begin{equation}\label{iterated}
\|(f_\nu)_\nu\,| \ell_q(L_\px)\| = \big\| \big(\| f_\nu\,| L_\px\|\big)_\nu \,| \ell_q\big\|.
\end{equation}
We note also that the values of $q$ have no influence on $\|(f_\nu)_\nu\,| \ell_\qx(L_\px)\|$ when we consider sequences having just one non-zero entry. In fact, as in the constant exponent case, we have
$\|(f_\nu)_\nu\,|\, \ell_\qx(L_\px)\| = \|f\,|\, L_\px\|$ whenever there exists $\nu_0\in\Nz$ such that $f_{\nu_0}=f$ and $f_\nu \equiv 0$ for all $\nu \not= \nu_0$ (cf. \cite[Example~3.4]{AlmH10}).

Simple calculations show that given any sequence $(f_\nu)_\nu$ of measurable functions, finiteness of $\|(f_\nu)_\nu\,| \ell_\qx(L_\px)\|$ implies $(f_\nu)_\nu\in\ell_\qx(L_\px)$, which in turn implies $f_\nu\in L_\px$ for each $\nu\in\Nz$. Moreover,
$$\|(f_\nu)_\nu\,| \ell_\qx(L_\px)\| \le 1 \ \ \ \text{if and only if} \ \ \ \varrho_{\ell_\qx(L_\px)}\big( (f_\nu)_\nu\big ) \le 1 \ \ \ \ \ \text{(unit ball property)}$$
(see \cite{AlmH10}).

We notice that both mixed sequence spaces $L_\px(\ell_\qx)$ and $\ell_\qx(L_\px)$ satisfy the lattice property.

The next lemma can be proved following the arguments used in the proof of Theorem~6.1 (i),(iii) in \cite{AlmH10}.
\begin{lemma}\label{lem:lpqembed}
Let $p,q,q_0,q_1\in\PP$. If $q_0\le q_1$ then we have
\begin{equation*}
L_\px(\ell_\qzx) \hookrightarrow L_\px(\ell_\qumx) \ \ \ \ \text{and} \ \ \ \ \ell_\qzx(L_\px) \hookrightarrow \ell_\qumx(L_\px).
\end{equation*}
Moreover, if $p^+,q^+<\infty$ then it also holds
\begin{equation*}
\ell_{\min\{\px,\qx\}}(L_\px)   \hookrightarrow L_\px(\ell_\qx) \hookrightarrow \ell_{\max\{\px,\qx\}}(L_\px).
\end{equation*}
\end{lemma}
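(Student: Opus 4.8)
The plan is to prove the three embeddings one at a time, in each case following the scheme of the proof of \cite[Theorem~6.1]{AlmH10}.

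For $L_\px(\ell_\qzx)\hookrightarrow L_\px(\ell_\qumx)$ the argument is pointwise in $x$: since $q_0(x)\le q_1(x)$, the classical monotone inclusion $\ell_{q_0(x)}\hookrightarrow\ell_{q_1(x)}$ (with norm $\le 1$) gives $\|(f_\nu(x))_\nu\,|\,\ell_{q_1(x)}\|\le\|(f_\nu(x))_\nu\,|\,\ell_{q_0(x)}\|$ for a.e.\ $x$, and applying the lattice property of $L_\px$ to these two functions of $x$ produces the embedding with constant $1$. For $\ell_\qzx(L_\px)\hookrightarrow\ell_\qumx(L_\px)$, which is not a genuinely iterated space, I would work with the semimodular \eqref{def:lpqmod}: by the unit ball property and homogeneity it suffices to show that $\varrho_{\ell_\qzx(L_\px)}\big((f_\nu)_\nu\big)\le 1$ forces $\varrho_{\ell_\qumx(L_\px)}\big((f_\nu)_\nu\big)\le 1$. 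Writing $b_\nu^i:=\inf\{\lambda>0:\varrho_\px(f_\nu/\lambda^{1/q_i(\cdot)})\le 1\}$, the hypothesis gives $\sum_\nu b_\nu^0\le 1$, hence $b_\nu^0\le 1$ for each $\nu$. The elementary observation is that for $0<\lambda\le 1$ and $q_0\le q_1$ one has $\lambda^{1/q_0(x)}\le\lambda^{1/q_1(x)}$, whence $\varrho_\px(f_\nu/\lambda^{1/q_1(\cdot)})\le\varrho_\px(f_\nu/\lambda^{1/q_0(\cdot)})$; since $\varrho_\px(f_\nu/\lambda^{1/q_i(\cdot)})$ is non-increasing and left-continuous in $\lambda$ (so the defining infimum is attained), taking $\lambda=b_\nu^0$ (which is $\le 1$) yields $b_\nu^1\le b_\nu^0$. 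Summing over $\nu$ gives $\varrho_{\ell_\qumx(L_\px)}\big((f_\nu)_\nu\big)\le\varrho_{\ell_\qzx(L_\px)}\big((f_\nu)_\nu\big)\le 1$. The only slightly delicate point is the bookkeeping with the infima; the borderline case $b_\nu^0=1$ is immediate, since at $\lambda=1$ both conditions are just $\varrho_\px(f_\nu)\le 1$.

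For the sandwich $\ell_{\min\{\px,\qx\}}(L_\px)\hookrightarrow L_\px(\ell_\qx)\hookrightarrow\ell_{\max\{\px,\qx\}}(L_\px)$, under the assumption $p^+,q^+<\infty$, I would mimic \cite[Theorem~6.1(iii)]{AlmH10}. Thanks to $p^+,q^+<\infty$ (hence also $(\min\{\px,\qx\})^+$ and $(\max\{\px,\qx\})^+$ are finite), all $\ell(L)$-modulars appearing can be written in the simpler form \eqref{def:lpqmodsimple}, while the unit ball property of $L_\px$ together with \eqref{def:lpq} turns $\|(f_\nu)_\nu\,|\,L_\px(\ell_\qx)\|\le 1$ into the integral condition $\int_\Rn\big(\sum_\nu|f_\nu(x)|^{q(x)}\big)^{p(x)/q(x)}\,dx\le 1$. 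After these reductions each of the two embeddings becomes a variable-exponent analogue of Minkowski's inequality for mixed norms: for the right-hand one, one combines the pointwise inequality $\|\cdot\,|\,\ell_{\max\{p(x),q(x)\}}\|\le\|\cdot\,|\,\ell_{q(x)}\|$ with a Minkowski-type estimate for the norm in $L_{\px/\max\{\px,\qx\}}$; for the left-hand one, the dual combination involving $\ell_{\min\{p(x),q(x)\}}$ and a convexity estimate in $L_{\px/\min\{\px,\qx\}}$. I expect this to be the main obstacle: because $p$ and $q$ may genuinely cross, one cannot partition $\Rn$ into $\{q\le p\}$ and $\{q\ge p\}$ and fall back on the classical Minkowski inequality on each piece, so the estimates must be carried through with the variable exponents. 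Once these modular inequalities are in hand, the unit ball property converts them into the two claimed continuous embeddings.
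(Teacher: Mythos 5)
Your overall route is the one the paper itself takes: the authors give no written proof but refer to the arguments of \cite[Theorem~6.1 (i),(iii)]{AlmH10}, and that is exactly the scheme you follow. Parts one and two of your argument are essentially complete. For $L_\px(\ell_\qzx)\hookrightarrow L_\px(\ell_\qumx)$ the pointwise monotonicity of the $\ell_{q(x)}$ scale plus the lattice property of $L_\px$ is indeed all that is needed. For $\ell_\qzx(L_\px)\hookrightarrow\ell_\qumx(L_\px)$ your comparison of the infima $b^0_\nu$ and $b^1_\nu$ is sound, and the attainment issue you flag is handled by monotone convergence: as $\lambda\downarrow b^0_\nu$ the functions $f_\nu/\lambda^{1/q_0(x)}$ increase to $f_\nu/(b^0_\nu)^{1/q_0(x)}$ and $\phi_{p(x)}$ is nondecreasing and lower semicontinuous, so $\varrho_\px\big(f_\nu/(b^0_\nu)^{1/\qzx}\big)\le 1$ and the infimum is attained whenever $b^0_\nu>0$; your key inequality at $\lambda=b^0_\nu\le 1$ then gives $b^1_\nu\le b^0_\nu$.

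The gap is in the sandwich embeddings. Your reduction (unit ball property, simple form \eqref{def:lpqmodsimple} of the modular, pointwise inclusions $\ell_{\min\{p(x),q(x)\}}\hookrightarrow\ell_{q(x)}\hookrightarrow\ell_{\max\{p(x),q(x)\}}$) is correct, but the two ``Minkowski-type estimates'' you then invoke are the entire content of the proof and are left unproved; moreover the obstacle you anticipate, that $p$ and $q$ may cross, is illusory. Setting $r:=\max\{\px,\qx\}$ and $m:=\min\{\px,\qx\}$ one has $\px/r\le 1\le \px/m$ \emph{everywhere} on $\Rn$, so no partition of $\Rn$ is needed: the interchange of the $\nu$-sum with the $L$-quasinorm required for the left embedding, namely $\big\|\sum_\nu|f_\nu|^{m}\,\big|\,L_{\px/m}\big\|\le\sum_\nu\big\||f_\nu|^{m}\,\big|\,L_{\px/m}\big\|$, is just the triangle inequality in the normed space $L_{\px/m}$ (exponent $\ge 1$), while the right embedding needs the reverse inequality $\sum_\nu\big\||f_\nu|^{r}\,\big|\,L_{\px/r}\big\|\le\big\|\sum_\nu|f_\nu|^{r}\,\big|\,L_{\px/r}\big\|$, i.e.\ superadditivity of the $L_{s(\cdot)}$-quasinorm on nonnegative functions when $s\le 1$. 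This reverse Minkowski inequality is the one genuine lemma missing from your sketch; it follows from the concavity of $t\mapsto t^{s(x)}$ applied to the convex combination $\sum_\nu\frac{a_\nu}{A}\frac{g_\nu}{a_\nu}$ with $a_\nu=\|g_\nu\,|\,L_{s(\cdot)}\|$, $A=\sum_\nu a_\nu$, together with the fact that $\varrho_{s(\cdot)}(g/\|g\|)=1$ when $0<\|g\|<\infty$ and $s^+<\infty$ --- which, besides the use of \eqref{def:lpqmodsimple}, is where the hypothesis $p^+,q^+<\infty$ actually enters. With these two inequalities supplied, your reduction closes the argument.
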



We notice that the Hardy-Littlewood maximal operator is not, in general, a good tool in the spaces $L_\px(\ell_\qx)$ and $\ell_\qx(L_\px)$. Indeed, as observed in \cite{AlmH10} and \cite{DieHR09}, such operator is not bounded in these spaces when $q$ is non-constant. A way of overcoming this difficulty is to use convolution inequalities involving radially decreasing kernels,
namely the so-called $\eta$-\emph{functions} having the form
\[
\eta_{\nu,R}(x) := \frac{2^{n\nu}}{(1+2^\nu|x|)^R}\,, \ \ \ x\in\Rn,
\]
with $\nu\in \Nz$ and $R>0$. Note that $\eta_{\nu,R}\in L_1$ when
$R>n$ and that $\| \eta_{\nu,R}\,|L_1\|$ depends only on $n$ and $R$.

\begin{lemma}\label{thm:conv-eta}
Let $p,q\in\PPlog$ and $(f_\nu)_\nu$ be a sequence of non-negative measurable functions on $\Rn$.
\begin{enumerate}
\item[(i)] If $1<p^-\leq p^+ <\infty$ and $1<q^-\leq q^+ <\infty$, then for $R>n$ there holds
\[
\| (\eta_{\nu,R} * f_\nu)_\nu\,| L_\px(\ell_\qx) \| \lesssim \|
(f_\nu)_\nu \,| L_\px(\ell_\qx) \|.
\]
\item[(ii)] If $p^-\ge 1$ and $R>n+c_{\log}(1/q)$, then
\[
\| (\eta_{\nu,R} * f_\nu)_\nu\,| \ell_\qx(L_\px) \| \lesssim \|
(f_\nu)_\nu \,| \ell_\qx(L_\px) \|.
\]
\end{enumerate}
\end{lemma}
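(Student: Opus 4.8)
The plan is to prove the two convolution inequalities separately, reducing each to a pointwise or vector-valued estimate that is already available in the literature for the building-block spaces.

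For part (i), the key observation is that the mixed Lebesgue-sequence space $L_\px(\ell_\qx)$ is, by its very definition \eqref{def:lpq}, the space $L_\px$ of the $\Rn$-valued (in fact $\ell_{q(x)}$-valued) function $x \mapsto (f_\nu(x))_\nu$. The standard route here is to invoke the vector-valued maximal inequality of Diening--H\"ast\"o--Roudenko: under the hypotheses $1<p^-\le p^+<\infty$, $1<q^-\le q^+<\infty$ and $p,q\in\PPlog$, the Hardy--Littlewood maximal operator is bounded on $L_\px(\ell_\qx)$ in the sense that $\|(\mathcal{M}f_\nu)_\nu\,|L_\px(\ell_\qx)\| \lesssim \|(f_\nu)_\nu\,|L_\px(\ell_\qx)\|$. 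Thus the first step is the elementary pointwise bound
\[
(\eta_{\nu,R}*f_\nu)(x) \lesssim \mathcal{M}f_\nu(x), \qquad R>n,
\]
valid because $\eta_{\nu,R}$ is dominated by a radially decreasing $L_1$ kernel whose dilation structure at scale $2^{-\nu}$ makes the convolution controlled by the maximal function with a constant depending only on $n$ and $R$ (this is the standard ``approximation of the identity'' estimate). Combining this pointwise inequality with the lattice property of $L_\px(\ell_\qx)$ and then with the vector-valued maximal inequality gives (i) immediately.

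For part (ii) the maximal operator is unavailable (as the text itself stresses, $\mathcal{M}$ is not bounded on $\ell_\qx(L_\px)$ for non-constant $q$), so a direct kernel argument is needed. The natural approach is to pass to the semimodular \eqref{def:lpqmod} and estimate term by term. One splits $\eta_{\nu,R} = \sum_{k\ge 0} \eta_{\nu,R}\,\chi_{A_{\nu,k}}$ into dyadic annuli $A_{\nu,k}=\{2^{k-1}\le 2^\nu|x|\le 2^k\}$ (with $A_{\nu,0}$ the ball), on each of which $\eta_{\nu,R}\approx 2^{n\nu}2^{-kR}$ and $|A_{\nu,k}|\approx 2^{-n\nu}2^{nk}$. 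On annulus $k$ the convolution is bounded by an average of $f_\nu$ over a ball of radius $\approx 2^{k-\nu}$, and the point is to control the $L_\px$-norm of such an average. Here one uses the $\log$-H\"older continuity of $1/q$ together with the characteristic-function estimates \eqref{norm-charact-func-cubes}: the effect of enlarging the ball by a factor $2^k$ on the $q(x)$-exponent scaling costs at most a factor $2^{kc_{\log}(1/q)}$ inside the relevant norms, which is exactly why the hypothesis $R>n+c_{\log}(1/q)$ appears — it guarantees that the geometric series $\sum_k 2^{-kR}2^{nk}2^{kc_{\log}(1/q)}$ converges. Assembling the annular pieces, using the triangle-type inequalities for $\ell_\qx(L_\px)$ and the unit ball property, then summing in $\nu$, yields the claimed bound.

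The main obstacle is part (ii): because $\ell_\qx(L_\px)$ is defined through the somewhat delicate semimodular \eqref{def:lpqmod} rather than by a genuine iteration (except when $q$ is constant, cf. \eqref{iterated}), one cannot simply commute the $\ell_\qx$-norm past the convolution, and the bookkeeping of how the $\log$-H\"older constant of $1/q$ interacts with the dyadic annular decomposition has to be done carefully. I would organize this by first proving a clean single-function lemma — an estimate of the form $\|\eta_{\nu,R}*f\,|L_\px\| \lesssim \sum_{k} 2^{-k(R-n-c_{\log}(1/q))}\,(\text{shifted/averaged version of }\|f\,|L_\px\|)$ with constants uniform in $\nu$ — and only then feeding it into the sequence-space machinery. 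The case $q^+=\infty$ should be handled via the general form \eqref{def:lpqmod} rather than the simplified \eqref{def:lpqmodsimple}, but the argument is structurally the same. As noted in the statement, all of this can be carried out by following the pattern already established in the proof of the corresponding results in \cite{AlmH10}.
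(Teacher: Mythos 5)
First, note that the paper does not actually prove this lemma: it states it and refers to \cite[Theorem~3.2]{DieHR09} for part (i) and to \cite[Lemma~4.7]{AlmH10} and \cite[Lemma~10]{KemV12} for part (ii). So your proposal has to stand on its own, and part (i), as you argue it, does not. Your reduction of (i) to a Fefferman--Stein type vector-valued maximal inequality on $L_\px(\ell_\qx)$ invokes precisely the tool that is unavailable here: as the paper stresses in the paragraph immediately preceding the lemma, the Hardy--Littlewood maximal operator is \emph{not} bounded on $L_\px(\ell_\qx)$ (nor on $\ell_\qx(L_\px)$) when $q$ is non-constant, even for $p,q\in\PPlog$ with $1<p^-\le p^+<\infty$ and $1<q^-\le q^+<\infty$; a counterexample is given in \cite{DieHR09}. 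The whole point of the $\eta$-function convolution inequalities is to serve as a \emph{substitute} for that missing inequality, not to be derived from it. Your pointwise bound $\eta_{\nu,R}*f_\nu\lesssim \mathcal{M}f_\nu$ for $R>n$, uniformly in $\nu$, is correct, but it dominates the left-hand side by a quantity whose $L_\px(\ell_\qx)$ quasi-norm cannot be controlled. Part (i) therefore requires the direct argument of \cite{DieHR09}, which works with dyadic decompositions of the kernel and exploits the $\log$-H\"older continuity of both $1/p$ and $1/q$ at the level of the convolution itself.

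Your sketch for part (ii) is in the right spirit and close to how \cite[Lemma~4.7]{AlmH10} actually proceeds: one decomposes the kernel into annuli, passes to the semimodular \eqref{def:lpqmod}, and uses the local $\log$-H\"older continuity of $1/q$ to absorb a factor $2^{k c_{\log}(1/q)}$ per annulus, which explains the threshold $R>n+c_{\log}(1/q)$. However, as written it is only a plan: the decisive step --- the precise mechanism by which varying $q$ across an annulus of radius $2^{k-\nu}$ costs exactly $2^{kc_{\log}(1/q)}$ inside the modular, and how the infima defining \eqref{def:lpqmod} are handled when $q^+=\infty$ --- is asserted rather than carried out. If you intend to give a self-contained proof rather than a citation, that is the part that needs to be written in detail; otherwise, both parts should simply be cited as the paper does, and in particular the maximal-function shortcut in (i) must be abandoned.
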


The convolution inequality in (i) above was given in \cite[Theorem~3.2]{DieHR09}, while the satement (ii) was established in \cite[Lemma~4.7]{AlmH10} and \cite[Lemma~10]{KemV12}.

\subsection{Admissible weights}

\begin{definition}\label{def:weights}
Let $\alpha,\alpha_1,\alpha_2\in\R$ with $\alpha \ge 0$ and $\alpha_1 \le \alpha_2$.
We say that a sequence of positive measurable functions $\w = (w_j)_j$ belongs to class $\W$ if
\begin{enumerate}
\item[(i)] there exists $c>0$ such that
\[
0<w_j(x) \leq c\,w_j(y)\left(1+2^j|x-y|\right)^\alpha
\]
for all $j\in\Nz$ and $x,y\in\Rn$;
\item[(ii)] there holds
\[
2^{\alpha_1}\, w_j(x) \leq w_{j+1}(x) \leq 2^{\alpha_2}\, w_j(x)
\]
for all $j\in\Nz$ and $x\in\Rn$.
\end{enumerate}
\end{definition}

A sequence according to the definition above is called an \emph{admissible weight sequence}.
When we write $\w\in\W$ without any restrictions then $\alpha \ge 0$ and $\alpha_1, \alpha_2\in\R$ (with $\alpha_1\leq \alpha_2$) are arbitrary but fixed numbers.
Some useful properties of class $\W$ may be found in \cite[Remark~2.4]{Kem08}.

\begin{example}
A fundamental example of an admissible weight sequence $\w$ is that formed by the 2-microlocal weights
\[
w_j(x)= 2^{js}(1+2^j\,d(x,U))^{s'}
\]
where $s,s'\in\R$ and $d(x,U)$ is the distance of $x\in\Rn$ from the (fixed) subset $U\subset\Rn$. In this case we have $\w\in \mathcal{W}^{|s'|}_{\min\{0,s'\},\max\{0,s'\}}$. The particular case $U=\{x_0\}$ (for a given point $x_0\in\Rn$), corresponds to the typical weights
\begin{equation}\label{2-microweights}
w_j(x)= 2^{js}(1+2^j\,|x-x_0|)^{s'}.
\end{equation}
\end{example}

\begin{example}\label{ex:var-smoothness}
If $s:\Rn \rightarrow \R$ is in class $C^{\log}_{\loc}$, then the weight sequence given by
\[
w_j(x)= 2^{js(x)}
\]
is in class $\mathcal{W}^{c_{\log}(s)}_{s^-,s^+}$. This follows from the estimate
\[
2^{js(x)} \eta_{j,2R}(x-y) \lesssim 2^{js(y)} \eta_{j,R}(x-y), \ \ \ \ \ x,y\in\Rn, \ \ \ j\in\Nz,
\]
which holds for $R\ge c_{\log}(s)$ (see \cite[Lemma~19]{KemV12} as a variant of \cite[Lemma~6.1]{DieHR09}).
\end{example}

\begin{example}\label{ex:gen-smoothness}
Let $(\sigma_j)_j$ be a sequence of nonnegative real numbers satisfying
\[
d_1\sigma_j \le \sigma_{j+1} \le d_2\,\sigma_j, \ \ \ j\in\Nz,
\]
for some $d_1,d_2>0$ independent of $j$. If we define the (constant) sequence $\w$ by
\[
w_j(x) \equiv \sigma_j, \ \ \ j\in\Nz,
\]
then we see that $\w\in \mathcal{W}^{0}_{\log_2 d_1,\log_2 d_2}$.

\end{example}

\begin{example}\label{ex:weight-function}
Let $\rho(x)$ be an admissible weight function, that means
$$
0<\rho(x) \lesssim \rho(y) (1+|x-y|^2)^{\frac{\beta}{2}}\,, \ \ \ x\in\Rn \ \ \ \ (\beta \ge 0).
$$
Taking $w_j(x)= 2^{js}\rho(x)$ ($s\in\R$) we obtain an admissible sequence belonging to class $\mathcal{W}^{\beta}_{s,s}$.
\end{example}

\subsection{$2$-microlocal spaces with variable integrability}\label{sec:microlocalspaces}



We say that a pair $(\varphi,\Phi)$ of functions in $\cS$ is \emph{admissible} if
\begin{itemize}
\item $\supp \hat{\varphi} \subset \{\xi\in\Rn: \, \tfrac{1}{2} \le |\xi| \le 2 \}\, $ and $\, |\hat{\varphi}(\xi)| \ge c>0$ when $\tfrac{3}{5} \le |\xi| \le \tfrac{5}{3}$;
\item $\supp \hat{\Phi} \subset \{\xi\in\Rn: \, |\xi| \le 2 \}\, $ and $\, |\hat{\Phi}(\xi)| \ge c>0$ when $ |\xi| \le \tfrac{5}{3}$.
\end{itemize}

Set $\varphi_j:=2^{jn}\varphi(2^j\cdot)$ for $j\in\N$ and $\varphi_0:=\Phi$. Then $\varphi_j\in\cS$ and
\[
\supp \widehat{\varphi}_j \subset \{\xi\in\Rn: \, 2^{j-1} \le |\xi| \le 2^{j+1} \} \,, \ \ \ j\in\N.
\]
Such a system $\{\varphi_j\}$ is also said admissible.

Now we are ready to recall the Fourier analytical approach to function spaces of Besov and Triebel-Lizorkin type.

\begin{definition} 
Let $\w=(w_j)_j\in\W$ and $p,q\in\PP$.
\begin{enumerate}
\item[(i)] $\B$ is the set of all $f\in\cS'$ such that
\begin{equation}\label{Bnorm}
\|f\,|\B\|:= \big\| (w_j(\varphi_j\ast f))_j\,|\, \ell_\qx(L_\px)\big\| < \infty.
\end{equation}
\item[(ii)] Restricting to $p^+,q^+<\infty$, $\F$ is the set of all $f\in\cS'$ such that
\begin{equation}\label{Fnorm}
\|f\,|\F\|:= \big\| (w_j(\varphi_j\ast f))_j\, |\, L_\px(\ell_\qx)\big\| < \infty.
\end{equation}
\end{enumerate}
\end{definition}

The sets $\B$ and $\F$ become quasi-normed spaces equipped with the quasinorms \eqref{Bnorm} and \eqref{Fnorm}, respectively. As in the constant exponent case, they agree when $p=q$, i.e., $B^\w_{\px,\px}=F^\w_{\px,\px}$.

In the sequel we shall write $\A$ for short when there is no need to distinguish between  Besov and Triebel-Lizorkin spaces.

\begin{example}[$2$-microlocal spaces]
A fundamental example of $2$-microlocal spaces (from which the terminology seem to come from) are the spaces constructed from the special weight sequence given by \eqref{2-microweights}. Such spaces have been considered by Peetre \cite{Pee75}, Bony \cite{Bo84}, Jaffard and Meyer \cite{Jaf91}, \cite{JafMey96}. The latter authors have used, in particular, the spaces $H^{s,s'}_{x_0}=B^{\w}_{2,2}$ and $C^{s,s'}_{x_0}=B^{\w}_{\infty,\infty}$ in connection with the study of regularity properties of functions using wavelet tools.
%
\end{example}

\begin{example}[variable smoothness]
If $w_j(x)= 2^{js(x)}$ with $s\in C^{\log}_{\loc}$, then $\B=B^{s(\cdot)}_{\px,\qx}$  and $\F=F^{s(\cdot)}_{\px,\qx}$ are the scales of spaces with variable smoothness and integrability introduced in \cite{AlmH10} and \cite{DieHR09}, respectively.
\end{example}

\begin{example}[generalized smoothness]
When $\w=(\sigma_j)_j$ is a sequence as in Example \ref{ex:gen-smoothness}, then $\A=A^{\sigma}_{\px,\qx}$  are spaces of generalized smoothness. For constant exponents $p,q$ a systematic study of such spaces was carried out by Farkas and Leopold \cite{FarL06} (even considering more general partitions of unity in their definitions), see also the study by Moura in \cite{Mou01b}. Spaces of generalized smoothness (and constant integrability) have been introduced by Goldman \cite{Gold76} and Kalyabin and Lizorkin \cite{KalLiz87}. We note that such type of function spaces were also considered in the context of interpolation in \cite{CobF86,Mer83}.
\end{example}

\begin{example}[weighted spaces]
If $w_j(x)= 2^{js}\rho(x)$ as in Example~\ref{ex:weight-function} then we get weighted function spaces (see \cite[Chapter~4]{EdTri96} and \cite{HTri05} for constant $p$ and $q$).
\end{example}


%

For simplicity we will omit the reference to the admissible pair $(\varphi,\Phi)$ used to define the quasi-norms \eqref{Bnorm} and \eqref{Fnorm}. As we  will see below, we shall obtain the same sets $\B$ and $\F$ for different choices of such pairs, at least when $p$ and $q$ satisfy some regularity assumptions (see Corollary~\ref{cor:independent}).

\section{Maximal functions characterization}\label{sec:PeetreMaxFunc}

Given $a>0$, $f\in \cS'$ and $(\psi_j)_j \subset \cS$, the \emph{Peetre maximal functions} are defined as
\[
\big( \psi^\ast_j f\big)_a(x):= \sup_{y\in\Rn} \frac{|\psi_j\ast f(y)|}{1+|2^j(x-y)|^{a}}\,, \ \ \ x\in\Rn, \ \ j\in\Nz.
\]

It is well known that these functions constitute an important tool in the study of properties of several classical functions spaces starting with getting equivalent quasi-norms. For Besov spaces with variable smoothness and integrability this topic was studied in \cite{Drihem12} using modified versions of the Peetre maximal functions above. The characterization via maximal functions was extended to the general setting of the spaces $A^\w_{\px,\qx}$ in papers \cite[Corollary~4.7]{Kem09} (in the $F$ case) and \cite[Theorem~6]{KemV12} (in the $B$ case).

The next statement integrates both cases and makes a critical improvement to the corresponding results from those papers (see explanations below).

\begin{theorem}\label{thm:Peetre}
Let $\w\in\W$ and $L\in\Nz$ with $L>\alpha_2$. Let also $\Psi,\psi\in\cS$ be such that
\begin{equation}\label{aux2}
|\hat{\Psi}(\xi)|>0 \ \ \ \text{on} \ \ \ \{\xi\in\Rn: |\xi|\le k\varepsilon\},
\end{equation}
\begin{equation}\label{aux3}
|\hat{\psi}(\xi)|>0 \ \ \ \text{on} \ \ \ \big\{\xi\in\Rn: \tfrac{\varepsilon}{2}\le |\xi|\le k\varepsilon\big\},
\end{equation}
for some $k\in]1,2]$ and $\varepsilon >0$, and
\begin{equation}\label{aux8}
D^{\gamma} \hat{\psi}(0)=0 \ \ \ \text{for} \ \ 0\le |\gamma| < L.
\end{equation}
Define
$$
\psi_0:= \Psi \ \ \ \text{and} \ \ \ \psi_j:= 2^{jn}\psi(2^j\cdot), \ \ j\in\N.
$$
\begin{enumerate}
\item[(i)] If $p,q\in\PPlog$ and $a>\alpha+ \tfrac{n}{p^-}+c_{\log}(1/q)$, then
\begin{equation*}
\|f\,|\B\|  \approx  \big\| (w_j(\psi_j\ast f))_j\,|\, \ell_\qx(L_\px)\big\|
 \approx  \Big\| \big(w_j\big( \psi^\ast_j f\big)_a\big)_j\in \ell_\qx(L_\px)\Big\|
\end{equation*}
for all $f\in\cS'$.
\item[(ii)] If $p,q\in\PPlog$ with $\max\{p^+,q^+\}<\infty$, and $a>\alpha+\tfrac{n}{\min\{p^-,q^-\}}$, then
\begin{equation*}
\|f\,|\F\| \approx \big\| (w_j(\psi_j\ast f))_j \,|\, L_\px(\ell_\qx)\big\| \approx \Big\| \big(w_j\big( \psi^\ast_j f\big)_a\big)_j \,|\, L_\px(\ell_\qx)\Big\|
\end{equation*}
for all $f\in\cS'$.
\end{enumerate}
\end{theorem}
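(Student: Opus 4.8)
The plan is to follow the by-now-standard three-step scheme for Peetre maximal characterizations, originating with Rychkov and adapted to the variable setting in \cite{DieHR09}, \cite{AlmH10}, \cite{KemV12}, but being careful about the two points the authors advertise as improvements: the exact range of the parameter $a$ and the correct handling of the weight sequence via Definition~\ref{def:weights}. Throughout, fix an admissible system $\{\varphi_j\}$ defining the quasi-norms \eqref{Bnorm}, \eqref{Fnorm}. The chain of inequalities to establish is, in each of the $B$- and $F$-cases,
\[
\big\|(w_j(\varphi_j\ast f))_j\big\| \lesssim \Big\|\big(w_j(\psi_j^\ast f)_a\big)_j\Big\| \lesssim \Big\|\big(w_j(\psi_j^\ast f)_a\big)_j\Big\| \ \ \text{(trivial)} \ \ \gtrsim \big\|(w_j(\psi_j\ast f))_j\big\|
\]
where the nontrivial directions are (a) controlling the original $\varphi_j\ast f$ by the Peetre maximal functions of $\psi_j$, and (b) controlling the Peetre maximal functions $(\psi_j^\ast f)_a$ by the plain convolutions $\psi_j\ast f$. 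Since $\|(w_j(\psi_j\ast f))_j\|\le\|(w_j(\psi_j^\ast f)_a)_j\|$ is immediate from $|\psi_j\ast f(x)|\le(\psi_j^\ast f)_a(x)$, the whole theorem reduces to proving (a) and (b), and then reading off the equivalence with $\|f\,|\A\|$ by applying (a)+(b) once with $\psi=\varphi$ (legitimate because $\{\varphi_j\}$ satisfies \eqref{aux2}--\eqref{aux8} trivially, with $L$ as large as one likes since $\hat\varphi$ vanishes near the origin, and \eqref{aux8} is automatic).

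\textbf{Step 1 (the pointwise Peetre estimate).} First I would prove the key pointwise inequality: for suitable resolution of unity there are Schwartz functions so that $\psi_j\ast f$ can be written as $\sum_{k\ge 0}(\text{kernel of width }2^{-\max(j,k)})\ast(\psi_k\ast f)$, with the kernels enjoying arbitrarily high moment cancellation coming from \eqref{aux8} (on the $\psi$ side) and from the support condition of the dual function near the origin. Concretely, using \eqref{aux2}--\eqref{aux3} one builds $(\Lambda,\lambda)\in\cS$ with $\hat\Lambda\hat\Psi+\sum_{j\ge1}\hat\lambda(2^{-j}\cdot)\hat\psi(2^{-j}\cdot)\equiv 1$ and $\lambda$ again having vanishing moments up to order $L$; then $f=\sum_k\lambda_k\ast\psi_k\ast f$ and hence
\[
|\psi_j\ast f(y)| \le \sum_{k\ge 0} |(\psi_j\ast\lambda_k)\ast(\psi_k\ast f)(y)|.
\]
A standard estimate on $\psi_j\ast\lambda_k$ (using the moment conditions to gain a factor $2^{-|j-k|L}$ and Schwartz decay to get an $\eta$-function bound) yields, after dividing by $1+|2^j(x-y)|^a$ and taking the sup in $y$,
\[
\big(\psi_j^\ast f\big)_a(x) \lesssim \sum_{k\ge 0} 2^{-|j-k|N}\, 2^{(k-j)_+ a}\, \big(\psi_k^\ast f\big)_a(x)
\]
for every $N$, where one needs $N$ large — here $N$ can be taken as large as wished since $L$ may be chosen large on the $\varphi$-side and, on the abstract $\psi$-side, $L>\alpha_2$ together with a to-be-checked summation against the weight suffices. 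This is where condition $L>\alpha_2$ enters: it guarantees that the series $\sum_k 2^{-|j-k|L}\cdot(w_j/w_k)$ converges, because $w_j/w_k\le 2^{\alpha_2(j-k)_+}2^{-\alpha_1(k-j)_+}$ by Definition~\ref{def:weights}(ii). Multiplying the displayed inequality by $w_j$ and inserting $w_k/w_k$, I would arrive at
\[
w_j\big(\psi_j^\ast f\big)_a(x) \lesssim \sum_{k\ge 0} 2^{-|j-k|\delta}\, w_k\big(\psi_k^\ast f\big)_a(x)
\]
for some $\delta>0$, with constants independent of $f$, $x$, $j$.

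\textbf{Step 2 (absorbing the geometric convolution in the mixed sequence spaces).} Next I would pass to quasi-norms. The sequence-space inequality $\big\|(\sum_k 2^{-|j-k|\delta} g_k)_j \,|\, X\big\| \lesssim \|(g_k)_k\,|\,X\|$ for $X\in\{\ell_\qx(L_\px), L_\px(\ell_\qx)\}$ is elementary: write $\sum_k 2^{-|j-k|\delta}g_k$ as a countable sum of shifted copies of $(g_k)_k$ weighted by $2^{-m\delta}$, use the quasi-triangle inequality in the form \eqref{Lp-norm-mod} (or iterate carefully, since $X$ is only a quasi-norm and the triangle inequality may genuinely fail in $\ell_\qx(L_\px)$), and the fact that shifting the index of a sequence does not increase its $X$-quasi-norm. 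One has to be a little careful because the constant in the quasi-triangle inequality is $>1$; the clean way is to raise to a small power $t\le\min(1,p^-,q^-)$ so that $\|\cdot\|^t$ is subadditive, sum the geometric series, and take the $t$-th root. Applying this to the output of Step~1 gives
\[
\Big\|\big(w_j(\psi_j^\ast f)_a\big)_j \,\big|\, X\Big\| \lesssim \Big\|\big(w_j(\psi_j^\ast f)_a\big)_j\,\big|\,X\Big\|
\]
— which is useless on its own — so the real content is that Step~1 must instead be set up to bound $w_j(\psi_j^\ast f)_a$ by $\sum_k 2^{-|j-k|\delta}\, w_k\,|\psi_k\ast f|\ast\eta_{k,R}$ (plain convolutions, \emph{not} maximal functions), which is possible because in the sup over $y$ defining $(\psi_j^\ast f)_a$ one estimates $|\psi_k\ast f(z)|$ for $z$ near $y$ by an $\eta$-average, not by its own maximal function. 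Then Step~2 combined with the $\eta$-convolution inequalities of Lemma~\ref{thm:conv-eta} — here is where $p,q\in\PPlog$ and the lower bounds on $a$ are used, since $R$ must exceed $n+c_{\log}(1/q)$ in the $B$-case and the threshold $a>\alpha+n/\min\{p^-,q^-\}$ governs $R>n$ together with the Fefferman--Stein-type bound in the $F$-case — yields
\[
\Big\|\big(w_j(\psi_j^\ast f)_a\big)_j\,\big|\,X\Big\| \lesssim \Big\|\big(w_j(\psi_j\ast f)\big)_j\,\big|\,X\Big\|,
\]
which is direction (b). Direction (a) is the same argument run with the roles of $\varphi$ and $\psi$ interchanged in the resolution of unity: $\varphi_j\ast f = \sum_k \varphi_j\ast\lambda_k\ast\psi_k\ast f$, giving $w_j|\varphi_j\ast f|\lesssim\sum_k 2^{-|j-k|\delta} w_k(\psi_k^\ast f)_a$ pointwise, hence $\|f\,|\A\|\lesssim\|(w_j(\psi_j^\ast f)_a)_j\,|\,X\|$ after Step~2.

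\textbf{Main obstacle.} The delicate point — and presumably exactly what the authors mean by "unclear points" in \cite{KemV12} — is Step~2 in the space $\ell_\qx(L_\px)$: this space is only a quasi-normed semimodular space, the Hardy--Littlewood maximal operator is unbounded on it, and even the triangle inequality can fail when $q$ is nonconstant, so the geometric-sum manipulation must be done entirely at the level of the $t$-subadditive functional $\|\cdot\|^t$ and the modular, never invoking boundedness of $M$. One must also verify that the left-hand sides are finite to begin with before dividing or rearranging — i.e.\ run the argument first under an a priori finiteness assumption (e.g.\ for $f$ with compactly supported Fourier transform, or by a truncation/limiting argument) and then remove it. The interplay of the three constraints — $L>\alpha_2$ (to sum the weight ratios), $a>\alpha+\cdots$ (to have enough decay in $x$ for the $\eta$-convolution to converge after accounting for the weight's spatial growth of order $\alpha$), and $R>n+c_{\log}(1/q)$ (for Lemma~\ref{thm:conv-eta}(ii)) — has to be tracked precisely; getting the sharp lower bound on $a$ is the quantitative heart of the improvement over the earlier papers. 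Everything else is the routine Rychkov machinery.
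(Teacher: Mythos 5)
Your outline follows exactly the route the paper itself takes: the authors do not write out the Rychkov scheme in full either, but refer to \cite{Rych99}, \cite{Ull12}, \cite{Kem09}, \cite{KemV12} for the overall structure (reproducing formula with moment cancellation, pointwise estimate of $(\psi_j^\ast f)_a$ by $\eta$-averages of $|\psi_k\ast f|$, then Lemma~\ref{thm:conv-eta}), and they single out precisely one ingredient as needing a new proof: the discrete convolution inequality, which they state and prove as Lemma~\ref{lem:discrete-conv}. Your identification of the roles of $L>\alpha_2$, of $\alpha$ in the lower bound for $a$, and of the observation that any admissible pair satisfies \eqref{aux2}--\eqref{aux3} with $\varepsilon=\tfrac{6}{5}$, $k=\tfrac{25}{18}$ all match the paper.

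The genuine gap is in your Step~2, and it is exactly the step the paper does prove in detail. Your proposed mechanism for the inequality $\|(\sum_k 2^{-|j-k|\delta}g_k)_j\,|\,\ell_\qx(L_\px)\|\lesssim\|(g_k)_k\,|\,\ell_\qx(L_\px)\|$ --- ``raise to a small power $t\le\min(1,p^-,q^-)$ so that $\|\cdot\|^t$ is subadditive, sum the geometric series'' --- fails for $\ell_\qx(L_\px)$ with variable $q$: as recalled in Section~\ref{sec:prelim} (with the counterexample from \cite{KemV13}), the triangle inequality for $\|\cdot\,|\,\ell_\qx(L_\px)\|$ can fail even when $\min\{p^-,q^-\}\ge 1$, so no such exponent $t$ renders $\|\cdot\|^t$ subadditive, and an Aoki--Rolewicz renorming does not obviously give countable subadditivity either. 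You correctly flag in your ``main obstacle'' paragraph that the manipulation must be carried out at the level of the semimodular, but you never supply that argument, and it is not routine: the paper's Lemma~\ref{lem:discrete-conv} first reduces to $p,q\ge1$ by the $r$-power rescaling, and then (Step~4) works directly with $\varrho_{\ell_\qx(L_\px)}$, applying Minkowski's inequality in $L_\px$ inside each infimum and proving, via the auxiliary quantities $I_{\nu,l}$ and an $\varepsilon 2^{-|l|}$ perturbation, that the infimum defining the modular is dominated by the sum of the shifted infima before the double sum can be interchanged. Without this (or an equivalent substitute), the chain of estimates in your Step~2 does not close in the $B$-case; the $L_\px(\ell_\qx)$ case is unproblematic, since there the pointwise $\ell_{q(x)}$-Minkowski argument (the paper's Step~2 of the lemma) works as you describe.
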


Instead of \eqref{aux2}, \eqref{aux3} what usually one finds in the literature (see, e.g. \cite[(1)]{Rych99}, \cite[(2.17)]{Ull12}, \cite[(4),(5)]{Kem09}, \cite[(8),(9)]{KemV12}, \cite[(3.6)]{LSUYY13}) reads, in our notation, respectively
$$
|\hat{\Psi}(\xi)|>0 \ \ \ \text{on} \ \ \ \{\xi\in\Rn: |\xi|<2\varepsilon\},
$$
$$
|\hat{\psi}(\xi)|>0 \ \ \ \text{on} \ \ \ \big\{\xi\in\Rn: \tfrac{\varepsilon}{2}< |\xi|<2\varepsilon\big\}.
$$
That is, apart from having here strict inequalities (which is a minor detail), only the case $k=2$ is usually considered. This is not a problem in itself, but, depending on what has already been proved and how the $B$ and $F$ spaces are originally defined, one might end up claiming results which are not really proved.
The problem we are about to mention does not arise in the classical case with constant exponents because the independence of the spaces from the dyadic resolution of unity taken as in \cite[Definition~2.3.1]{Tri92} has been proved (see, e.g. \cite[Theorem~2.3.2]{Tri92}) independently of the classical counterpart of our Theorem~\ref{thm:Peetre} (and with $k=2$), and therefore we can use a particular dyadic resolution of unity which fits in the hypothesis of the latter theorem in order to expand the universe of systems $\{\varphi_j\}$ which can be used in the definition of the $B$ and $F$ spaces without changing them. A summary of what we can get in this way in the classical situation, including historical remarks, can be seen in \cite[Section~1.3]{Tri06} (see, in particular, Theorem~1.7 there for the counterpart of our Theorem~\ref{thm:Peetre} with $k=2$).

However, in \cite{Kem09} and \cite{KemV12}, in a setting of variable exponents, though a theorem like Theorem~\ref{thm:Peetre} (with $k=2$) is stated, what is actually proved is that the definition of the spaces is independent of the dyadic resolutions of unity producing systems which satisfy the hypotheses of the theorem for some $\varepsilon>0$ (and $k=2$). Since not all dyadic resolutions of unity considered in those papers produce systems with such characteristics (not even if we allow $k$ to vary in $]1,2]$), that claimed independence was not completely proved.

Our point of view for the definition of the spaces is different, using in it a system $\{\varphi_j\}$ built from a so-called admissible pair as defined in the beginning of Subsection~\ref{sec:microlocalspaces}. Even so, the problem of sticking to $k=2$ in \eqref{aux2}, \eqref{aux3} is that not all admissible pairs produce systems satisfying such conditions, and therefore, taking into account the approach followed in the proof, the independence of the space from such admissible pairs would not be guaranteed. On the other hand, it is easy to see that any admissible pair forms systems satisfying \eqref{aux2}, \eqref{aux3} with $\varepsilon = \frac{6}{5}$ and $k=\frac{25}{18}$. Since \eqref{aux8} is also trivially satisfied, the claimed independence is in fact proved. In some sense this is already implicit in the statements given in the theorem, but we would like to stress it as a separate important conclusion:


\begin{corollary}\label{cor:independent}
Let $\w\in\W$ and $p,q\in\PPlog$ (with $\max\{p^+,q^+\}<\infty$ in the $F$-case). Then the spaces $\B$ and $\F$ are independent of the admissible pair $(\varphi,\Phi)$ taken in its definition, in the sense that different such pairs produce equivalent quasi-norms in the corresponding spaces.
\end{corollary}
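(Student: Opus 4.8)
The plan is to derive the corollary directly from Theorem~\ref{thm:Peetre} by exhibiting, for an arbitrary admissible pair $(\varphi,\Phi)$, a choice of the parameters $\varepsilon>0$ and $k\in\,]1,2]$ for which the associated system $\{\varphi_j\}$ satisfies hypotheses \eqref{aux2}, \eqref{aux3} and \eqref{aux8}. First I would observe that by definition of an admissible pair one has $|\hat\varphi(\xi)|\ge c>0$ on $\tfrac35\le|\xi|\le\tfrac53$ and $|\hat\Phi(\xi)|\ge c>0$ on $|\xi|\le\tfrac53$; so with the choice $\varepsilon=\tfrac65$ and $k=\tfrac{25}{18}$ (note $k\varepsilon=\tfrac53$ and $\tfrac{\varepsilon}{2}=\tfrac35$) the conditions \eqref{aux2} and \eqref{aux3} hold with $\Psi=\Phi$, $\psi=\varphi$. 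Moreover, since $\supp\hat\varphi\subset\{\tfrac12\le|\xi|\le 2\}$ stays away from the origin, $\hat\varphi$ vanishes to infinite order at $0$, so \eqref{aux8} holds for every $L$, in particular for any $L\in\Nz$ with $L>\alpha_2$. Thus every admissible system is among the systems $\{\psi_j\}$ covered by Theorem~\ref{thm:Peetre}.

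Next I would fix one reference admissible pair $(\varphi^0,\Phi^0)$ and, for this fixed choice, regard the quasi-norms $\|\cdot\,|\B\|$ and $\|\cdot\,|\F\|$ in \eqref{Bnorm}, \eqref{Fnorm} as the ``official'' ones. Given any other admissible pair $(\varphi^1,\Phi^1)$, with associated system $\{\varphi^1_j\}$, apply Theorem~\ref{thm:Peetre} (part~(i) in the Besov case, part~(ii) in the Triebel--Lizorkin case) with $\Psi=\Phi^1$, $\psi=\varphi^1$, $\varepsilon=\tfrac65$, $k=\tfrac{25}{18}$, and with any admissible $L>\alpha_2$. The theorem gives, for the appropriate range of $a$ (which is nonempty precisely because $p,q\in\PPlog$ so that $p^-,q^->0$ and, via local $\log$-Hölder continuity, $c_{\log}(1/q)<\infty$), the equivalence
\[
\|f\,|\B\|\approx\big\|(w_j(\varphi^1_j\ast f))_j\,|\,\ell_\qx(L_\px)\big\|
\]
and likewise in the $F$-case with $L_\px(\ell_\qx)$. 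Here the left-hand side is computed with the reference system, but the theorem's constants are independent of $f$, so this says exactly that the quasi-norm built from $(\varphi^1,\Phi^1)$ is equivalent to the reference one, with equivalence constants depending only on $\w$, $p$, $q$, $L$, $a$ and the two pairs. Running this for every admissible pair against the single reference pair yields the stated independence.

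I do not expect a genuine obstacle here: the corollary is essentially a packaging of Theorem~\ref{thm:Peetre}, and the only thing to check carefully is the arithmetic that an arbitrary admissible pair really does fit the hypotheses for \emph{some} admissible $(\varepsilon,k)$ — which is why the relaxation from $k=2$ to general $k\in\,]1,2]$ in \eqref{aux2}, \eqref{aux3} (and, correspondingly, the reformulation with $k\varepsilon$ rather than $2\varepsilon$) was built into the theorem in the first place. One minor point worth stating explicitly is that $\B$ and $\F$ as \emph{sets} (not merely the quasi-norms) are the same for all admissible pairs: this is immediate once the quasi-norms are equivalent, since $f$ belongs to the space defined by a given pair iff the corresponding quasi-norm is finite. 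I would phrase the proof so that this set-theoretic conclusion is drawn alongside the quasi-norm equivalence.
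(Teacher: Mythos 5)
Your proposal is correct and follows exactly the route the paper takes: the authors likewise observe that any admissible pair satisfies \eqref{aux2}, \eqref{aux3} with $\varepsilon=\tfrac65$ and $k=\tfrac{25}{18}$ (so that $k\varepsilon=\tfrac53$ and $\tfrac{\varepsilon}{2}=\tfrac35$), that \eqref{aux8} is trivially satisfied since $\supp\hat\varphi$ avoids the origin, and then read off the equivalence of quasi-norms from Theorem~\ref{thm:Peetre}. Your checks of the arithmetic and of the nonemptiness of the admissible range of $a$ are accurate, so nothing is missing.
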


The approach to Besov and Triebel-Lizorkin spaces that we are taking is also common in the literature -- see, e.g. \cite{FraJ85,FraJ90} in the case of constant exponents, and \cite{DieHR09} and \cite{AlmH10} in the case of spaces with variable smoothness and integrability. The independence of the variable Triebel-Lizorkin spaces from the particular admissible pair taken was proved in \cite{DieHR09} (under convenient restrictions on the exponents) with the help of the so-called $\varphi$-transform, identifying $F^{s(\cdot)}_{\px,\qx}$ with a subspace of a suitable sequence space, following the approach taken in the constant exponent situation in \cite{FraJ90}. In \cite{AlmH10} (see also \cite[Remark~1.2]{AlmH14} for a correction) the corresponding independence result for $B^{s(\cdot)}_{\px,\qx}$ was only settled for a subfamily of admissible pairs.

As pointed out above, in the present paper we give a positive answer to the question of the definition of the general $2$-microlocal spaces $\B$ and $\F$ being independent of the particular admissible pair taken. Surely there are dyadic resolutions of unity which produce systems which satisfy the conditions of Theorem~\ref{thm:Peetre}, and so it is also clear that they give rise to the same $2$-microlocal spaces with variable exponents. We are not aware, however, of results showing that, in the variable setting, any dyadic resolution of unity as in \cite[Definition~2.3.1]{Tri92} will give the same outcome.

Notice that Theorem~\ref{thm:Peetre} not only free us to be tied to a specific admissible pair, but also ensures that more general pairs can indeed be used to define the spaces instead of the admissible ones, namely pairs $(\psi,\Psi)$ satisfying the requirements of the theorem.

From Theorem \ref{thm:Peetre} we can also derive a characterization of the spaces $\A$ in terms of the so-called \emph{local means} (cf. \cite[Corollary~4.8]{Kem09} and \cite[Corollary~1]{KemV12}), which are defined by
$$
k(t,f)(x):= \langle f(x+t\cdot),k\rangle := t^{-n} \langle f,k\big(\tfrac{\cdot-x}{t}\big)\rangle\, \ \ \ \ x\in\Rn, \ \ t>0,
$$
for a $C^\infty$ function $k$ on $\Rn$ supported in $B(0,1)$ and $f\in\cS'$.

\begin{corollary}\label{cor:means}
Let $k_0,k^0$ be $C^\infty$ functions such that
$$
\supp k_0,\, \supp k^0 \subset B(0,1) \ \ \ \ \text{and} \ \ \ \ \widehat{k_0}(0),\, \widehat{k^0}(0) \ne 0.
$$
Let also $\w\in\W$ and $N\in\Nz$ be such that $2N>\alpha_2$, and take
$k^N:=\Delta^N k^0$ (the Laplacian of order $N$ of $k^0$). If $p,q\in\PPlog$, then
\[
\|f\,|\B\| \approx \| w_0\,k_0(1,f)\,|L_\px\| + \Big\| \big(w_j\,k^N(2^{-j},f)\big)_{j\in\N}\,|\, \ell_\qx(L_\px) \Big\|
\]
and
\[
\|f\,|\F\| \approx \| w_0\,k_0(1,f)\,|L_\px\| + \Big\| \big(w_j\,k^N(2^{-j},f)\big)_{j\in\N}\,|\, L_\px(\ell_\qx) \Big\|
\]
for all $f\in\cS'$ (with the additional restriction $\max\{p^+,q^+\}<\infty$ in the $F$ case).
\end{corollary}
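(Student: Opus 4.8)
\emph{Proof sketch.} The plan is to obtain Corollary~\ref{cor:means} from the middle equivalences in Theorem~\ref{thm:Peetre}, after reinterpreting the local means as convolutions of the form $\psi_j\ast f$. The starting observation is the elementary identity
\begin{equation*}
k(t,f) = f\ast (\widetilde k\,)_t, \qquad \widetilde k(z):=k(-z), \quad g_t(z):=t^{-n}g(z/t),
\end{equation*}
valid for any $f\in\cS'$ and any $k\in\cS$ (in particular for $C^\infty$ functions with compact support), which is immediate from the definition of $k(t,f)$ together with $k\big(\tfrac{\cdot-x}{t}\big)\in\cS$. Hence, putting $\Psi:=\widetilde{k_0}$, $\psi:=\widetilde{k^N}$, and $\psi_0:=\Psi$, $\psi_j:=2^{jn}\psi(2^j\cdot)$ for $j\in\N$ as in Theorem~\ref{thm:Peetre}, we obtain $\psi_0\ast f = k_0(1,f)$ and $\psi_j\ast f = k^N(2^{-j},f)$ for every $j\in\N$.

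The bulk of the argument is the verification that $\Psi$ and $\psi$ meet the hypotheses \eqref{aux2}, \eqref{aux3}, \eqref{aux8}. Since $k_0$ and $k^0$ are compactly supported and smooth, the Paley--Wiener--Schwartz theorem gives that $\widehat{k_0}$ and $\widehat{k^0}$ are restrictions to $\Rn$ of entire functions; being non-zero at the origin, they are non-zero on a whole ball $\{|\xi|\le 2\varepsilon\}$ for some $\varepsilon>0$. Taking $k:=2\in\,]1,2]$, the identity $|\widehat\Psi(\xi)|=|\widehat{k_0}(-\xi)|$ then yields \eqref{aux2}. For $\psi$, the relation $k^N=\Delta^N k^0$ gives $\widehat{k^N}(\xi)=c_N\,|\xi|^{2N}\,\widehat{k^0}(\xi)$ for a non-zero constant $c_N$ depending only on $N$ (and the chosen Fourier normalization); therefore $|\widehat\psi(\xi)|=|c_N|\,|\xi|^{2N}\,|\widehat{k^0}(-\xi)|$ is strictly positive for $0<|\xi|\le 2\varepsilon$, in particular on the annulus $\{\tfrac{\varepsilon}{2}\le|\xi|\le 2\varepsilon\}$, which is \eqref{aux3}. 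The same factorization shows that $\widehat{k^N}$, hence $\widehat\psi$, vanishes at $0$ to order $2N$, so $D^\gamma\widehat\psi(0)=0$ for $0\le|\gamma|<2N$; thus \eqref{aux8} holds with the choice $L:=2N\in\Nz$, and the requirement $L>\alpha_2$ in Theorem~\ref{thm:Peetre} is precisely the standing assumption $2N>\alpha_2$.

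With these facts in hand, Theorem~\ref{thm:Peetre}(i) (for any admissible $a$, e.g.\ $a>\alpha+\tfrac{n}{p^-}+c_{\log}(1/q)$) gives $\|f\,|\B\|\approx\big\|(w_j(\psi_j\ast f))_j\,|\,\ell_\qx(L_\px)\big\|$, and Theorem~\ref{thm:Peetre}(ii) gives the corresponding $F$-statement when $\max\{p^+,q^+\}<\infty$. To finish, I would split off the index $j=0$: by the quasi-triangle inequality and the lattice property of the mixed quasi-norms, together with the fact that a sequence with a single non-zero entry $g$ has $\ell_\qx(L_\px)$- (resp.\ $L_\px(\ell_\qx)$-) quasi-norm equal to $\|g\,|\,L_\px\|$, one has, for non-negative sequences,
\begin{equation*}
\big\|(a_j)_{j\in\Nz}\big\| \approx \|a_0\,|\,L_\px\| + \big\|(a_j)_{j\in\N}\big\|\,;
\end{equation*}
applying this with $a_j:=w_j\,|\psi_j\ast f|$ produces the two displayed equivalences. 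I do not foresee a genuine obstacle here: the only slightly delicate points are making the convolution reformulation of $k(t,f)$ precise for arbitrary $f\in\cS'$, and the bookkeeping of the vanishing order at the origin that ties \eqref{aux8} to the hypothesis $2N>\alpha_2$. \qed
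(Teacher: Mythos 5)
Your proposal is correct and follows exactly the route the paper intends (and leaves implicit, citing the analogous corollaries in Kempka and Kempka--Vyb\'{\i}ral): rewrite the local means as convolutions $\psi_j\ast f$ with $\Psi=\widetilde{k_0}$, $\psi=\widetilde{k^N}$, verify \eqref{aux2}--\eqref{aux8} via continuity of $\widehat{k_0},\widehat{k^0}$ at the origin and the factorization $\widehat{k^N}(\xi)=c_N|\xi|^{2N}\widehat{k^0}(\xi)$ with $L=2N>\alpha_2$, then apply Theorem~\ref{thm:Peetre} and split off the $j=0$ term using the single-entry identity for the mixed quasi-norms. All the details you supply check out.
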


Theorem~\ref{thm:Peetre} can be proved following the general structure of the proof done by Rychkov \cite{Rych99} in the classical case (see also \cite{Ull12} for some correction of the argument) overcoming the difficulties caused by the consideration of the general exponents. As we can see in \cite{Kem09} and \cite{KemV12}, where the variable exponent case was treated, the proof involves various technical auxiliary results, for instance on the behavior of the Peetre maximal operators on variable mixed sequence spaces. Another key tool in this approach is a kind of discrete convolution inequality. A corresponding inequality within the framework of both variable mixed sequence spaces was pointed out in full generality in \cite{KemV12}, but the arguments used there are unclear to us.

It is not our aim to repeat here the arguments leading to the proof of Theorem~\ref{thm:Peetre} but, due to the difficulty pointed out above, we shall give here a proof of the mentioned discrete convolution inequality (see Lemma~\ref{lem:discrete-conv}), using completely different arguments. At the same time this will be an opportunity to exhibit the kind of difficulties that may appear when we are dealing  with variable mixed sequence spaces, specially with $\ell_\qx(L_\px)$, when compared with the constant exponent situation.

\begin{lemma}\label{lem:discrete-conv}
Let  $p,q\in \PP$ and $\delta>0$. For any sequence $(g_k)_k$ of nonnegative measurable functions on $\R^n$, consider
\begin{equation}\label{gnu}
G_\nu(x):= \sum_{k=0}^\infty 2^{-|k-\nu|\delta}\, g_k(x)\, , \ \ \ \ \ x\in\R^n, \ \ \ \nu\in\N_0.
\end{equation}
Then we have
\begin{equation}\label{discrete-conv-1}
\| (G_\nu)_\nu\,| L_\px(\ell_\qx) \| \lesssim \| (g_\nu)_\nu \,| L_\px(\ell_\qx) \|.
\end{equation}
and
\begin{equation}\label{discrete-conv-2}
\| (G_\nu)_\nu\,| \ell_\qx(L_\px) \| \lesssim \| (g_\nu)_\nu \,| \ell_\qx(L_\px) \|
\end{equation}
\end{lemma}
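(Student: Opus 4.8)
The two estimates \eqref{discrete-conv-1} and \eqref{discrete-conv-2} have a superficially similar look but require rather different treatments, because $L_\px(\ell_\qx)$ is a genuine iterated construction in the first variable while $\ell_\qx(L_\px)$ is only a semimodular space. I would therefore split the argument.

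\emph{The $L_\px(\ell_\qx)$ estimate.} Here the plan is to reduce everything to a pointwise inequality between $\ell_{q(x)}$-quasi-norms. Fix $x\in\Rn$. Writing $\theta:=\min\{1,q^-\}\in(0,1]$, one has the elementary bound
\[
G_\nu(x)^\theta \le \sum_{k=0}^\infty 2^{-|k-\nu|\delta\theta}\, g_k(x)^\theta,
\]
and then, applying the triangle/convolution inequality in the \emph{constant}-exponent space $\ell_{q(x)/\theta}$ (whose exponent is $\ge 1$, so this is legitimate) together with the fact that $\sum_\nu (2^{-|\cdot|\delta\theta})$ has $\ell_1$-norm bounded by a constant depending only on $\delta$, one obtains
\[
\big\| (G_\nu(x))_\nu \,|\, \ell_{q(x)}\big\| \lesssim \big\| (g_\nu(x))_\nu \,|\, \ell_{q(x)}\big\|
\]
with an implicit constant independent of $x$. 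Taking the $L_\px$-quasi-norm of both sides and using the lattice property of $L_\px$ finishes this part. This is essentially routine and uses no regularity of $p,q$.

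\emph{The $\ell_\qx(L_\px)$ estimate.} This is the part I expect to be the main obstacle, precisely because $\|\cdot\,|\,\ell_\qx(L_\px)\|$ need not satisfy a triangle inequality in the sequence variable and one cannot naively commute the sum over $k$ with the ``norm''. The plan is to argue via the semimodular and the unit ball property. By homogeneity it suffices to show: if $\varrho_{\ell_\qx(L_\px)}\big((g_k)_k\big)\le 1$ then $\varrho_{\ell_\qx(L_\px)}\big(c^{-1}(G_\nu)_\nu\big)\le 1$ for a suitable constant $c=c(\delta,p,q)$. Using the definition \eqref{def:lpqmod} one has, for each $k$, a $\lambda_k>0$ with $\sum_k\lambda_k\le 1$ and $\varrho_\px\big(g_k/\lambda_k^{1/\qx}\big)\le 1$. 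For the candidate decomposition of $G_\nu$ I would take $\mu_\nu:=\sum_{k}2^{-|k-\nu|\delta'}\lambda_k$ for some $\delta'\in(0,\delta)$; then $\sum_\nu\mu_\nu\le\big(\sum_m 2^{-|m|\delta'}\big)\sum_k\lambda_k\le C_{\delta'}$, so $\sum_\nu\mu_\nu$ is under control. The crux is then the pointwise/modular estimate showing that $\varrho_\px\big(G_\nu/(c\,\mu_\nu^{1/\qx})\big)\le 1$: one writes $G_\nu=\sum_k 2^{-|k-\nu|\delta}g_k$, inserts $g_k=\big(g_k/\lambda_k^{1/\qx}\big)\lambda_k^{1/\qx}$, and uses the gap $\delta-\delta'>0$ together with Hölder's inequality for the discrete sum (or convexity of $\phi_{p(x)}$ when $p^-\ge1$, and an $\theta$-power trick when $p^-<1$) to absorb the weights $2^{-(\delta-\delta')|k-\nu|}$ and the factor $(\lambda_k/\mu_\nu)^{1/q(x)}\le 1$ coming from $\lambda_k\le 2^{|k-\nu|\delta'}\mu_\nu$. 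Summing over $k$ against the convergent geometric series then yields $\varrho_\px\big(G_\nu/(c\mu_\nu^{1/\qx})\big)\lesssim 1$, and after adjusting $c$ we get $\le 1$. Feeding this back into \eqref{def:lpqmod} gives $\varrho_{\ell_\qx(L_\px)}\big(c^{-1}(G_\nu)_\nu\big)\le\sum_\nu\mu_\nu\le C_{\delta'}$, which by the unit ball property (applied after one more rescaling) is exactly \eqref{discrete-conv-2}.

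\emph{Remarks on the difficulty and on what is \emph{not} needed.} The delicate point throughout is that in \eqref{def:lpqmod} the ``splitting levels'' $\lambda_k$ are chosen per-index and one must produce new levels $\mu_\nu$ for the convolved sequence in a way that is simultaneously summable and compatible with the $L_\px$-modular estimate; the trick of splitting the exponential decay $\delta=\delta'+(\delta-\delta')$, using the $\delta'$-part to build $\mu_\nu$ and the remaining $(\delta-\delta')$-part to run a discrete Hölder inequality, is what makes the two requirements coexist. It is worth emphasising that, unlike Lemma~\ref{thm:conv-eta}, this discrete inequality needs \emph{no} $\log$-Hölder regularity on $p$ or $q$ and no boundedness assumptions beyond $p,q\in\PP$, since no maximal operator and no translation of the spatial variable is involved — the kernel acts purely on the discrete index. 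This is precisely the feature that makes it a clean replacement for the unclear argument in \cite{KemV12}.
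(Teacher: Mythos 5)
Your argument for \eqref{discrete-conv-1} is essentially the paper's: a convolution (Minkowski) inequality in the constant-exponent space $\ell_{q(x)}$ for each fixed $x$, preceded by a power reduction to exponents $\ge 1$ and followed by the lattice property of $L_\px$; the paper merely performs the reduction to $p,q\ge 1$ once at the level of the mixed spaces rather than pointwise. For \eqref{discrete-conv-2}, however, your route is genuinely different. The paper works with the infima in \eqref{def:lpqmod} for $(G_\nu)_\nu$ directly: it applies Minkowski's inequality to $\big\|G_\nu/(c\,\mu\,\lambda_\nu^{1/\qx})\,|\,L_\px\big\|$, introduces auxiliary infima $I_{\nu,l}$ for the individual summands, shows via an $\varepsilon\,2^{-|l|}$ perturbation that $\sum_l I_{\nu,l}$ dominates the infimum attached to $G_\nu$, and finally redistributes the constants using $q\ge 1$. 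You instead exhibit explicit admissible levels for the convolved sequence, $\mu_\nu=\sum_k 2^{-|k-\nu|\delta'}\lambda_k$, obtained by convolving near-optimal levels of $(g_k)_k$ with a slower kernel; the summability $\sum_\nu\mu_\nu\lesssim\sum_k\lambda_k$ is then immediate and only a per-index modular estimate remains. This is more constructive and, I find, more transparent about where the two competing requirements (summability of the levels versus the $L_\px$ bound) are met. Both arguments correctly use no regularity of $p,q$ beyond $p,q\in\PP$, as you emphasise.

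One inequality in your sketch is misstated: from $\lambda_k\le 2^{|k-\nu|\delta'}\mu_\nu$ you get $(\lambda_k/\mu_\nu)^{1/q(x)}\le 2^{|k-\nu|\delta'/q(x)}$, not $\le 1$ (the ratio $\lambda_k/\mu_\nu$ exceeds $1$ as soon as $k\ne\nu$). This does not derail the proof, but it changes the bookkeeping: when $q\ge 1$ the spurious factor is at most $2^{|k-\nu|\delta'}$ and is absorbed precisely by the gap $2^{-|k-\nu|(\delta-\delta')}$ you reserved, whereas for $q^-<1$ you must either first reduce to $q\ge 1$ (you only mention the power trick for $p$) or choose $\delta'<\delta q^-$ so that $2^{-|k-\nu|\delta}\,2^{|k-\nu|\delta'/q^-}$ still decays geometrically. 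With that repair, plus the final rescaling from ``modular $\le C_{\delta'}$'' to ``modular $\le 1$'' (replace $\mu_\nu$ by $\mu_\nu/C_{\delta'}$ and enlarge $c$ by $C_{\delta'}^{1/q^-}$, which again uses only $q^->0$), your proof is complete.
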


\begin{proof}
\emph{Step 1}: The inequalities above need to be shown essentially for $p,q \ge 1$. In fact, if they hold in such case, then for arbitrary exponents $p,q\in \PP$ we can always take $r\in \left(0, \min\{1,p^-,q^-\}\right)$ and proceed as follows:
\begin{eqnarray*}
\| (G_\nu)_\nu\,| \ell_\qx(L_\px) \|^r  &= & \left\| (G_\nu^r)_\nu\,| \ell_{\frac{\qx}{r}}\big(L_{\frac{\px}{r}}\big) \right\|
\leq  \left\|  \sum_{k\ge 0} 2^{-|k-\nu|\delta\,r}\,  g_k^r  \,| \ell_{\frac{\qx}{r}}\big(L_{\frac{\px}{r}}\big) \right\| \\
& \lesssim & \left\| (g_k^r)_k  \,| \ell_{\frac{\qx}{r}}\big(L_{\frac{\px}{r}}\big) \right\| = \| (g_k)_k \,| \ell_\qx(L_\px) \|.
\end{eqnarray*}
The argument works in the same way for the other inequality.

\emph{Step 2}: We prove \eqref{discrete-conv-1} for $p,q\ge 1$.\\
For any $s\in[1,\infty]$ and $\delta>0$, from Minkowski's inequality we can show that
\[
\left\| \left(\sum_{k\geq 0} 2^{-|k-\nu|\delta} a_k\right)_{\nu\geq 0}\, | \ell_s \right\| \leq \frac{2}{1-2^{-\delta}}\, \left\| (a_k)_{k\geq 0}\, | \ell_s \right\|
\]
for every sequence $(a_k)_{k\geq 0}$ of nonnegative numbers. So for each $x\in\Rn$ we get
\[
\left\| \left(G_\nu\right)_{\nu\geq 0}\, | \ell_{q(x)} \right\| \leq \frac{2}{1-2^{-\delta}}\, \left\| (g_k(x))_{k\geq 0}\, | \ell_{q(x)} \right\|.
\]
Taking the $L_\px$-norm in both sides and using the lattice property of $L_\px$, we get inequality \eqref{discrete-conv-1}.

\emph{Step 3}: We prove \eqref{discrete-conv-2} for $p,q\ge 1$ and $q^+<\infty$.\\
Suppose that  $\| (g_\nu)_\nu \,| \ell_\qx(L_\px) \| < \infty$ (otherwise there is nothing to prove). We want to show that there exists a constant $c>0$ such that
$$
\| (G_\nu)_\nu\,| \ell_\qx(L_\px) \|^t \leq c\,\| (g_\nu)_\nu \,| \ell_\qx(L_\px) \|^t =:c\,\mu
$$
(where $t>0$ is a certain fixed number that is chosen below in a convenient way). This is equivalent to show that, for $\mu>0$,
$$
\left\| \frac{(G_\nu)_\nu}{(c\,\mu)^{1/t}}\,| \ell_\qx(L_\px) \right\| \leq 1,
$$
that is (by the unit ball property),
$$
\varrho_{\ell_\qx(L_\px)}\left(\frac{(G_\nu)_\nu}{(c\,\mu)^{1/t}} \right) \leq 1,
$$
which we shall do next.
Take $t=\min\{1, (p/q)^-\}$. Using \eqref{def:lpqmodsimple} we have
\begin{eqnarray*}
\varrho_{\ell_\qx(L_\px)}\left(\frac{(G_\nu)_\nu}{(c\,\mu)^{1/t}}\right) & = & \sum_{\nu \geq 0} \left\| \left|\frac{G_\nu}{(c\,\mu)^{1/t}}\right|^{\qx} | L_{\frac{\px}{\qx}}\right\| = \sum_{\nu \geq 0} \left\| \left(\frac{|G_\nu|^t}{c\,\mu}\right)^{\qx} | L_{\frac{\px}{t\qx}}\right\|^{1/t} \\
& \leq & \sum_{\nu \geq 0} \left\| \left((c\,\mu)^{-1} \sum_{l\geq -\nu} 2^{-|l|\delta\,t} \, |g_{\nu+l}|^t\right)^{\qx} | L_{\frac{\px}{t\qx}}\right\|^{1/t}.
\end{eqnarray*}
For each $x\in\R^n$ and $\nu\in\N_0$, from Hölder's inequality (with the convention $g_j \equiv 0$ for $j<0$) we get
\[
\sum_{l\geq -\nu} 2^{-|l|\delta\,t} \, |g_{\nu+l}(x)|^t \leq \left(\sum_{l\in\Z} 2^{-|l|\frac{\delta}{2}\,t q'(x)}\right)^{\frac{1}{q'(x)}} \, \left(\sum_{l\in\Z} 2^{-|l|\frac{\delta}{2}\,t q(x)} |g_{\nu+l}(x)|^{tq(x)}\right)^{\frac{1}{q(x)}}
\]
(with the usual modification when $q'(x)=\infty$). Using this pointwise estimate, letting $c=c_1\cdot c_2$ with $c_1:= \sum_{l\in\Z} 2^{-|l|\frac{\delta}{2}\,t}$ and $c_2\geq 1$, and applying Minkowski's inequality  twice, we derive
\begin{eqnarray*}
\varrho^t_{\ell_\qx(L_\px)}\left(\frac{(G_\nu)_\nu}{(c\,\mu)^{1/t}}\right) & \leq & \left[\sum_{\nu \geq 0} \left( \sum_{l\in \Z} c_2^{-1} 2^{-|l|\frac{\delta}{2}\,t} \left\| \left(\frac{|g_{\nu+l}|^t}{\mu}\right)^{\qx} | L_{\frac{\px}{t\qx}}\right\| \right)^{1/t} \right]^t\\
& \leq & \sum_{l\in \Z} c_2^{-1} 2^{-|l|\frac{\delta}{2}\,t} \left(\sum_{\nu \geq 0} \left\| \left(\frac{|g_{\nu+l}|^t}{\mu}\right)^{\qx} | L_{\frac{\px}{t\qx}}\right\|^{1/t} \right)^t.
\end{eqnarray*}
Choosing $c_2=c_1$ as above (and consequently $\sqrt{c}= \sum_{l\in\Z} 2^{-|l|\frac{\delta}{2}\,t}$), we obtain
\[
\varrho_{\ell_\qx(L_\px)}\left(\frac{(G_\nu)_\nu}{(c\,\mu)^{1/t}}\right) \leq \sum_{k \geq 0} \left\| \left(\frac{|g_{k}|}{\mu^{1/t}}\right)^{t\qx} | L_{\frac{\px}{t\qx}}\right\|^{1/t} = \varrho_{\ell_\qx(L_\px)}\left(\frac{(g_k)_k}{\mu^{1/t}}\right) \leq 1,
\]
where the last inequality follows from the hypothesis that the corresponding quasi-norm is less than or equal to $1$ (recall the definition of $\mu$ above).

\emph{Step 4}: We prove \eqref{discrete-conv-2} for any $p,q\ge 1$ (including the case $q^+=\infty$).\\
Let us assume that $\| (g_\nu)_\nu \,| \ell_\qx(L_\px) \| < \infty$. Again by the unit ball property, \eqref{discrete-conv-2} will follow from the inequality
\begin{equation}\label{aux6}
\varrho_{\ell_\qx(L_\px)}\left(\frac{(G_\nu)_\nu}{c\,\mu}\right) \leq 1,
\end{equation}
where $\mu = \| (g_\nu)_\nu \,| \ell_\qx(L_\px) \|>0$ and $c>0$ is a constant independent of $\mu$. Let us then prove \eqref{aux6}.\\
Under the convention $g_j \equiv 0$ for $j<0$, we get
\begin{eqnarray}
\varrho_{\ell_\qx(L_\px)}\left(\frac{(G_\nu)_\nu}{c\,\mu}\right) & = & \sum_{\nu \geq 0} \inf\left\{\lambda_\nu>0: \, \varrho_{\px}\left(\frac{G_\nu}{c\,\mu\,\lambda_\nu^{1/\qx}} \right) \le 1 \right\} \nonumber\\
& = & \sum_{\nu \geq 0} \inf\left\{\lambda_\nu>0: \, \left\|\frac{G_\nu}{c\,\mu\,\lambda_\nu^{1/\qx}} \,| L_\px\right\|\le 1 \right\} \nonumber \\
& \leq & \sum_{\nu \geq 0} \inf\left\{\lambda_\nu>0: \, c^{-1} \sum_{l\in\Z} 2^{-|l|\delta} \left\|\frac{g_{\nu+l}}{\mu\,\lambda_\nu^{1/\qx}} \,| L_\px\right\|\le 1 \right\} \label{inf}
\end{eqnarray}
after using Minkowski's inequality in the last step of this chain. Let us define
\[
I_{\nu,l}:= \inf\left\{\lambda>0: \, c^{-1} c(\delta)2^{-|l|\delta/2} \left\|\frac{g_{\nu+l}}{\mu\,\lambda^{1/\qx}} \,| L_\px\right\|\le 1 \right\}\,, \ \ \ \nu\in\Nz, \ \ \ l\in\Z,
\]
with $c(\delta):= \sum_{l\in\Z} 2^{-|l|\delta/2}$. We claim that, for each $\nu\in\Nz$, the sum $\sum_{l\in\Z} I_{\nu,l}$ is not smaller than the infimum in \eqref{inf}.
To prove this we can obviously assume that this sum is finite. For any $\varepsilon >0$ we have
$$
c^{-1} c(\delta) 2^{-|l|\delta/2} \left\|\frac{g_{\nu+l}}{\mu\,(I_{\nu,l} + \varepsilon 2^{-|l|})^{1/\qx}} \,| L_\px\right\|\le 1,
$$
so
$$
c^{-1} c(\delta) \sum_{l\in\Z} 2^{-|l|\delta} \left\|\frac{g_{\nu+l}}{\mu\,(I_{\nu,l} + \varepsilon 2^{-|l|})^{1/\qx}} \,| L_\px\right\|\le \sum_{l\in\Z} 2^{-|l|\delta/2}.
$$
Recalling the definition of $c(\delta)$, we also obtain
$$
c^{-1} \sum_{l\in\Z} 2^{-|l|\delta} \left\|\frac{g_{\nu+l}}{\mu\,\Big(\sum_{k\in\Z} (I_{\nu,k} + \varepsilon 2^{-|k|})\Big)^{1/\qx}} \,| L_\px\right\|\le 1,
$$
and hence
$$
\inf\left\{\lambda>0: \, c^{-1} \sum_{l\in\Z} 2^{-|l|\delta} \left\|\frac{g_{\nu+l}}{\mu\,\lambda^{1/\qx}} \,| L_\px\right\|\le 1 \right\} \leq \sum_{k\in\Z} I_{\nu,k} + \varepsilon \sum_{k\in\Z} 2^{-|k|}.
$$
Since the second series on the right-hand side converges and $\varepsilon>0$ is arbitrary, we get the desired estimate. Now using it in \eqref{inf} and making a convenient change of variables (choosing the constant $c\ge c(\delta)$ and noting that $q\ge 1$), we have
\begin{eqnarray*}
\varrho_{\ell_\qx(L_\px)}\left(\frac{(G_\nu)_\nu}{c\,\mu}\right) & \leq & \sum_{\nu \geq 0} \sum_{k\in\Z} \inf\left\{\lambda>0: \, c^{-1} c(\delta) 2^{-|k|\delta/2} \left\|\frac{g_{\nu+k}}{\mu\,\lambda^{1/\qx}} \,| L_\px\right\|\le 1 \right\}\\
& \leq & \sum_{\nu \geq 0} \sum_{k\in\Z} c^{-1} c(\delta) 2^{-|k|\delta/2} \inf\left\{\sigma>0: \, \left\|\frac{g_{\nu+k}}{\mu\,\sigma^{1/\qx}} \,| L_\px\right\|\le 1 \right\}\\
& = &  \sum_{k\in\Z} c^{-1} c(\delta) 2^{-|k|\delta/2} \sum_{\nu \geq 0} \inf\left\{\sigma>0: \, \left\|\frac{g_{\nu+k}}{\mu\,\sigma^{1/\qx}} \,| L_\px\right\|\le 1 \right\}\\
& \leq &  \sum_{k\in\Z} c^{-1} c(\delta) 2^{-|k|\delta/2} \,\varrho_{\ell_\qx(L_\px)}\left(\frac{(g_j)_j}{\mu}\right)\\
& \leq & 1
\end{eqnarray*}
with the choice $c=c(\delta)^2$, taking into account our definition of $\mu$.
\end{proof}

\begin{remark}
Of course we could have omitted Step 3 in the proof above since the arguments in Step 4 work in fact for arbitrary exponents $q$. Nevertheless, giving a presentation of a separate proof for bounded exponents $q$ we want to stress that this case is more intuitive and closer to the constant exponent situation.
\end{remark}

\section{Lifting property}\label{sec:lifting}

Given $\sigma\in\R$, the lifting operator $I_\sigma$ is defined by
$$
I_\sigma f:=\Big[ \big(1+|\cdot|^2\big)^{\sigma/2} \, \hat{f}\Big]^\vee\, , \ \ \ f\in\cS'.
$$
It is well known that $I_\sigma$ is a linear one-to-one mapping of $\cS'$ onto itself and that its restriction to $\cS$ is also a one-to-one mapping of $\cS$ onto itself.

The next proposition gives a lifting property for the $2$-microlocal spaces of variable integrability.

\begin{theorem}\label{thm:lift}
Let $\w\in\W$ and $p,q\in\PPlog$ ($\max\{p^+,q^+\}<\infty$ in the $F$ case). Then $I_\sigma$ maps $\A$ isomorphically onto $A^{(-\sigma)\w}_{\px,\qx}$, where $(-\sigma)\w:=(2^{-j\sigma}w_j)_j$.\\ Moreover, $\big\|I_\sigma \cdot \,| A^{(-\sigma)\w}_{\px,\qx} \big\|$ defines an equivalent quasi-norm in $\A$.
\end{theorem}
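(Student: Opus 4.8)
The plan is to reduce the whole statement to the Peetre maximal function characterization of Theorem~\ref{thm:Peetre}, applied to the weight sequences $\w$ and $(-\sigma)\w$. First I would check that $(-\sigma)\w=(2^{-j\sigma}w_j)_j$ is again an admissible weight sequence, in fact $(-\sigma)\w\in\mathcal{W}^{\alpha}_{\alpha_1-\sigma,\alpha_2-\sigma}$: multiplying $w_j$ by the constant $2^{-j\sigma}$ does not affect condition (i) of Definition~\ref{def:weights} and only shifts $\alpha_1,\alpha_2$ by $-\sigma$ in condition (ii). Hence $A^{(-\sigma)\w}_{\px,\qx}$ is a space of the same type as $\A$ and Theorem~\ref{thm:Peetre} is available for it too. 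Since $I_\sigma$ is a bijection of $\cS'$ onto itself with inverse $I_{-\sigma}$, it suffices to prove the two-sided estimate $\|I_\sigma f\,|\,A^{(-\sigma)\w}_{\px,\qx}\|\approx\|f\,|\,\A\|$ for all $f\in\cS'$: the isomorphism assertion and the surjectivity onto $A^{(-\sigma)\w}_{\px,\qx}$ then follow formally, by applying the estimate with $\w,\sigma,f$ replaced by $(-\sigma)\w,-\sigma,I_\sigma f$ and using $(\sigma)\big((-\sigma)\w\big)=\w$ together with $I_{-\sigma}I_\sigma=\mathrm{id}$.

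The crux is the pointwise estimate
\[
2^{-j\sigma}\,\big|(\varphi_j\ast I_\sigma f)(x)\big|\;\lesssim\;\big(\varphi^\ast_j f\big)_a(x)\,,\qquad x\in\Rn,\ j\in\Nz,
\]
with implicit constant independent of $j$, $x$ and $f\in\cS'$, where $\{\varphi_j\}$ is the fixed admissible system used in the definition of the spaces and $a$ is any exponent admissible in Theorem~\ref{thm:Peetre}. To obtain it I would introduce a fattened system: fix $\Phi_1\in\cS$ with $\widehat{\Phi}_1\equiv1$ on $\{1\le|\xi|\le4\}$ and $\supp\widehat{\Phi}_1\subset\{\tfrac12\le|\xi|\le8\}$, fix $\Phi_0\in\cS$ with compact Fourier support and $\widehat{\Phi}_0\equiv1$ on $\{|\xi|\le2\}$, and put $\widehat{\Phi}_j:=\widehat{\Phi}_1(2^{1-j}\cdot)$ for $j\ge1$, so that $\widehat{\Phi}_j\equiv1$ on $\supp\widehat{\varphi}_j$ for every $j\in\Nz$. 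Letting $\Theta_j\in\cS$ be given by $\widehat{\Theta}_j(\xi):=2^{-j\sigma}(1+|\xi|^2)^{\sigma/2}\,\widehat{\Phi}_j(\xi)$, one has $2^{-j\sigma}\,\widehat{\varphi}_j\,(1+|\cdot|^2)^{\sigma/2}=\widehat{\varphi}_j\,\widehat{\Theta}_j$, whence (by taking Fourier transforms) $2^{-j\sigma}(\varphi_j\ast I_\sigma f)=\Theta_j\ast(\varphi_j\ast f)$. A rescaling computation gives $\widehat{\Theta}_j(2^{j-1}\eta)=2^{-\sigma}\big(2^{-2(j-1)}+|\eta|^2\big)^{\sigma/2}\widehat{\Phi}_1(\eta)$ for $j\ge1$, and since $|\eta|$ stays bounded away from $0$ on $\supp\widehat{\Phi}_1$ this is a bounded family in $\cS$; hence $|\Theta_j(x)|\le c_R\,\eta_{j,R}(x)$ for every $R>0$, with $c_R$ independent of $j$ (the case $j=0$ being immediate from $\Theta_0\in\cS$). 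Plugging $|(\varphi_j\ast f)(y)|\le\big(1+|2^j(x-y)|^a\big)\big(\varphi^\ast_j f\big)_a(x)$ into $\Theta_j\ast(\varphi_j\ast f)$ and using $\int|\Theta_j(z)|\big(1+|2^jz|^a\big)\,dz\lesssim\int(1+|u|)^{a-R}\,du<\infty$ for $R>a+n$ then yields the displayed estimate.

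With this in hand I would finish as follows. Multiply the pointwise estimate by $w_j(x)$ and take the $\ell_\qx(L_\px)$-quasi-norm (in the $B$ case) or the $L_\px(\ell_\qx)$-quasi-norm (in the $F$ case) of the resulting sequences; by the lattice property of these spaces and the very definition of $\|I_\sigma f\,|\,A^{(-\sigma)\w}_{\px,\qx}\|$ this gives $\|I_\sigma f\,|\,A^{(-\sigma)\w}_{\px,\qx}\|\lesssim\big\|\big(w_j(\varphi^\ast_j f)_a\big)_j\,|\,\cdot\big\|$, and the right-hand side is $\approx\|f\,|\,\A\|$ by Theorem~\ref{thm:Peetre} applied with $\psi_j=\varphi_j$ — legitimate because any admissible pair satisfies \eqref{aux2} and \eqref{aux3} with $\varepsilon=\tfrac65$, $k=\tfrac{25}{18}$, while \eqref{aux8} holds trivially, for a suitable $L>\alpha_2$ and the chosen $a$. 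This proves $\|I_\sigma f\,|\,A^{(-\sigma)\w}_{\px,\qx}\|\lesssim\|f\,|\,\A\|$; the reverse inequality — and with it the equivalence of quasi-norms and the isomorphism property — follows by the formal argument from the first paragraph.

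The step I expect to be the main obstacle is the uniform-in-$j$ control of the kernels $\Theta_j$: one has to be careful that the symbol $2^{-j\sigma}(1+|\xi|^2)^{\sigma/2}$, localized to the annulus $|\xi|\sim2^j$, rescales to a fixed Schwartz function up to uniformly bounded error, so that the constants $c_R$ above do not grow with $j$ — this is precisely where the fattening by $\Phi_j$ and the fact that $|\eta|$ is bounded below on $\supp\widehat{\Phi}_1$ are used. Once that is settled, the variable-exponent structure enters only through the already-proved Theorem~\ref{thm:Peetre}, so no genuinely new difficulties with the mixed sequence spaces $L_\px(\ell_\qx)$ and $\ell_\qx(L_\px)$ appear here.
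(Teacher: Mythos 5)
Your proposal is correct and follows essentially the same route as the paper: reduce everything to the pointwise bound $2^{-j\sigma}|\varphi_j\ast I_\sigma f|\lesssim(\varphi_j^\ast f)_a$ obtained by multiplying the symbol $(1+|\xi|^2)^{\sigma/2}$ with a Fourier-side cut-off that is $\equiv 1$ on $\supp\widehat{\varphi}_j$, then conclude via the lattice property, Theorem~\ref{thm:Peetre}, and the formal argument with $I_{-\sigma}$. The only (harmless) difference is technical: you control the kernels by rescaling them to a bounded family in $\cS$ and dominating by $\eta_{j,R}$, whereas the paper bounds the corresponding integral directly using the derivative estimates \eqref{est-derivative} on the symbol.
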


\begin{proof}
For $\sigma\in\R$ and $\w\in\W$ we have $(-\sigma)\w\in\mathcal{W}^\alpha_{\alpha_1-\sigma, \alpha_2-\sigma}$.

Let $\{\varphi_j\}$ be an admissible system. Consider an auxiliary system $\{\Theta_k\} \subset \cS$ such that
$$
\Theta_0(x)=1 \ \ \ \text{if} \ \ |x|\le 1, \ \ \ \ \supp \Theta_0 \subset \{x\in\Rn: |x| \le 2\}
$$
and
$$
\sum_{k=0}^\infty \Theta_k(x)= 1, \ \ \ \ \forall x\in\Rn,
$$
where
$$
\Theta_k(x)= \Theta_0(2^{-k}x) - \Theta_0(2^{-k+1}x)\,, \ \ \ \ k\in\N, \ \ x\in\Rn.
$$
Then we have
$$
\supp \Theta_k \subset \{x\in\Rn: 2^{k-1} \le |x| \le 2^{k+1}\}\,, \ \ \ \ k\in\N.
$$
With the understanding that $\Theta_{-1} \equiv 0$, by straightforward calculations we get, for $j\in\Nz$, $x\in\Rn$ and $f\in\cS'$,
\begin{eqnarray*}
 & & (2\pi)^{\frac{n}{2}} \left| \Big( \varphi_j \ast \Big[ \big(1+|\cdot|^2\big)^{\frac{\sigma}{2}} \, \hat{f}\Big]^\vee\Big)(x)\right|  =
\left| \left(  \Big[ \big(1+|\cdot|^2\big)^{\frac{\sigma}{2}} \sum_{k=-1}^{1} \Theta_{j+k}(\cdot) \Big]^\vee \ast \varphi_j \ast f\right)(x)\right| \\
& \le & \big( \varphi^\ast_j f\big)_a(x) \; \int_{\Rn} \left|\Big[ \big(1+|\cdot|^2\big)^{\frac{\sigma}{2}} \sum_{k=-1}^{1} \Theta_{j+k}(\cdot) \Big]^\vee(y)\right|\, \big(1+|2^jy|^a\big)\,dy
\end{eqnarray*}
where $a>0$ is arbitrary. We need to control the integral above, which is, up to a multiplicative constant, dominated by
$$
\sum_{k=-1}^{1} \int_{\Rn} \frac{\Big|\Big[ \big(1+|\cdot|^2\big)^{\sigma/2}  \Theta_{j+k}(\cdot) \Big]^\vee(y)\, \big(1+|2^jy|^{a+n+1}\big)\Big|}{1+|2^jy|^{n+1}}\,dy.
$$
Taking into account the pointwise inequality
\begin{equation}\label{est-derivative}
\left| D^{\gamma} \big(1+|z|^2\big)^{\sigma/2} \right| \le c_{\sigma,\gamma}\, \big(1+|z|^2\big)^{\frac{\sigma-|\gamma|}{2}}
\end{equation}
and choosing $a>0$ in the form
\begin{equation}\label{choose-a}
a=2l-n-1,
\end{equation}
for $l\in\N$ large enough, we can show that the previous sum can be estimated from above by a constant times $2^{j\sigma}$. Hence we get
\begin{equation}\label{aux7}
\left| \Big( \varphi_j \ast \Big[ \big(1+|\cdot|^2\big)^{\sigma/2} \, \hat{f}\Big]^\vee\Big)(x)\right| \lesssim 2^{j\sigma}\, \big( \varphi^\ast_j f\big)_a(x)
\end{equation}
with the implicit constant not depending on $x\in\Rn$, $j\in\Nz$, $f\in\cS'$. If we choose such $a>0$ satisfying $a>\alpha+ \tfrac{n}{\min\{p^-,q^-\}}+c_{\log}(1/q)$, from the lattice property of the mixed sequence spaces and Theorem~\ref{thm:Peetre} we get
\begin{equation}\label{lift}
\left\| I_\sigma f \,|\, B^{(-\sigma)\w}_{\px,\qx}\right\| \lesssim \left\| \big(w_j\big( \varphi^\ast_j f\big)_a\big)_j(x) \,|\, \ell_{\qx}(L_\px)\right\|
\approx \left\| f \,|\, \B\right\|.
\end{equation}
In the $F$ case we can proceed exactly in the same way. Therefore $I_\sigma$ is a continuous operator from $\A$ into $A^{(-\sigma)\w}_{\px,\qx}$. Its inverse operator, $I_{-\sigma}$, is also continuous from $A^{(-\sigma)\w}_{\px,\qx}$ into $\A$. Indeed,
$$
\left\| I_{-\sigma} g \,|\, \A\right\| = \left\| I_{-\sigma} g \,|\, A^{(\sigma)(-\sigma)\w}_{\px,\qx}\right\| \lesssim \left\| g \,|\, A^{(-\sigma)\w}_{\px,\qx}\right\|,
$$
so that $f=I_{-\sigma} g\in \A$ if $g\in A^{(-\sigma)\w}_{\px,\qx}$. Since $I_\sigma f = g$, the previous inequality yields
$$
\left\| f \,|\, \A\right\| = \left\| I_{-\sigma}(I_\sigma f) \,|\, \A\right\| \lesssim \left\| I_\sigma f \,|\, A^{(-\sigma)\w}_{\px,\qx}\right\|.
$$
Combining this with \eqref{lift} and the corresponding estimate for the $F$ space, we get the equivalence
$$
\left\| f \,|\, \A\right\| \approx \left\| I_\sigma f \,|\, A^{(-\sigma)\w}_{\px,\qx}\right\|.
$$
\end{proof}

Our result includes, in particular, the lifting property that was proved in \cite[Lemma~4.4]{DieHR09} for the spaces $F^{s(\cdot)}_{\px,\qx}$. Our proof here is completely different and it is inspired by some arguments in the proof of \cite[Lemma~3.11]{LSUYY13}. We would like to stress that the bulk of the proof above is to show the pointwise estimate \eqref{aux7}, which has nothing to do with variable exponents, and then combine it with the characterization given in Theorem~\ref{thm:Peetre}.

\section{Embeddings}\label{sec:basic-embed}

Although we aim to work with function spaces independent of the starting system $\{ \varphi_\nu\}$, the $\log$-H\"older conditions are quite strong in the sense that some results work under much weaker assumptions. This is the case of the next two statements, where the conditions assumed over there are those actually needed in the proofs. The next embeddings should then be understood to hold when the same fixed system is used for the definition of all spaces involved.

The next corollary follows immediately from the embeddings given in Lemma~\ref{lem:lpqembed}.

\begin{corollary}\label{cor:embed1}
Let $p,q,q_0,q_1\in\PP$ and $\w\in\W$.
\begin{enumerate}
\item[(i)]
If $q_0\le q_1$ (and $p^+,q_0^+,q_1^+ < \infty$ when $A=F$), then
\begin{equation*}
A^{\w}_{\px,\qzx} \hookrightarrow A^{\w}_{\px,\qumx}.
\end{equation*}
\item[(ii)]
If $p^+,q^+ < \infty$, then
\begin{equation*}
B^\w_{\px,\min\{\px,\qx\}} \hookrightarrow
\F \hookrightarrow
B^\w_{\px,\max\{\px,\qx\}}.
\end{equation*}
In particular, $B^\w_{\px,\px} = F^\w_{\px,\px}$.
\end{enumerate}
\end{corollary}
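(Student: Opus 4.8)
The plan is to deduce Corollary~\ref{cor:embed1} directly from Lemma~\ref{lem:lpqembed}, exactly as the surrounding text announces, by unwinding the definitions of the quasi-norms \eqref{Bnorm} and \eqref{Fnorm} and applying the sequence-space embeddings to the sequence $\big(w_j(\varphi_j\ast f)\big)_j$. The point is that membership in $\B$ or $\F$ is, by definition, membership of this fixed sequence in $\ell_\qx(L_\px)$ or $L_\px(\ell_\qx)$, respectively, and that the ambient $\cS'$ is the same in all cases; so a continuous embedding between the mixed sequence spaces immediately transfers to a continuous embedding between the function spaces, \emph{provided the same admissible system $\{\varphi_j\}$ is used throughout} — which is precisely the convention fixed in the paragraph preceding the statement. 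No $\log$-H\"older regularity is needed here because Lemma~\ref{lem:lpqembed} itself requires none.

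For part (i), I would argue as follows. Let $f\in A^\w_{\px,\qzx}$, so that the sequence $F:=\big(w_j(\varphi_j\ast f)\big)_j$ lies in the relevant mixed sequence space built on $\qzx$. Since $q_0\le q_1$, Lemma~\ref{lem:lpqembed} gives $\ell_\qzx(L_\px)\hookrightarrow\ell_\qumx(L_\px)$ in the $B$ case, and $L_\px(\ell_\qzx)\hookrightarrow L_\px(\ell_\qumx)$ in the $F$ case (the latter requiring $p^+,q_0^+,q_1^+<\infty$, which is exactly the hypothesis imposed in (i) for $A=F$). Hence $F$ lies in the target space with $\|F\,|\,(\text{target})\|\lesssim\|F\,|\,(\text{source})\|$, which is to say $f\in A^\w_{\px,\qumx}$ with $\|f\,|\,A^\w_{\px,\qumx}\|\lesssim\|f\,|\,A^\w_{\px,\qzx}\|$. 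This is the claimed continuous embedding. Part (ii) is handled in the same manner: assuming $p^+,q^+<\infty$, the second chain in Lemma~\ref{lem:lpqembed}, namely $\ell_{\min\{\px,\qx\}}(L_\px)\hookrightarrow L_\px(\ell_\qx)\hookrightarrow\ell_{\max\{\px,\qx\}}(L_\px)$, applied to the same sequence $\big(w_j(\varphi_j\ast f)\big)_j$, yields $B^\w_{\px,\min\{\px,\qx\}}\hookrightarrow\F\hookrightarrow B^\w_{\px,\max\{\px,\qx\}}$. For the final assertion $B^\w_{\px,\px}=F^\w_{\px,\px}$, simply specialize $q=p$: then $\min\{\px,\qx\}=\max\{\px,\qx\}=\px$, and the two embeddings pinch $\F$ between two copies of $B^\w_{\px,\px}$, forcing equality of the spaces (with equivalent quasi-norms). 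Alternatively one may note that $L_\px(\ell_\px)=\ell_\px(L_\px)$ with identical quasi-norms, which gives the identity directly.

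There is essentially no obstacle here; the only things to be careful about are bookkeeping points rather than mathematical difficulties. First, one must check that finiteness of the relevant quasi-norm genuinely characterizes membership in each space — but this is already recorded in the excerpt (for $L_\px$ via the lattice property and the remark that $f\in L_\px\iff\|f\,|\,L_\px\|<\infty$, and for $\ell_\qx(L_\px)$ via the analogous statement noted after \eqref{def:lpqnorm}), so no separate verification is required. Second, one must keep the system $\{\varphi_j\}$ fixed across all the spaces in play; the continuity constants in the embeddings then depend only on the exponents and on the constants in Lemma~\ref{lem:lpqembed}, not on $f$. Since Corollary~\ref{cor:independent} is not being invoked (and indeed the preamble explicitly flags that these embeddings are stated for a common fixed system), there is nothing further to reconcile. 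The whole proof is therefore a two-line citation of Lemma~\ref{lem:lpqembed} applied coordinate-wise to $\big(w_j(\varphi_j\ast f)\big)_j$, followed by the trivial specialization $q=p$ for the last claim.
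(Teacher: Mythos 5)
Your proposal is correct and coincides with the paper's own argument: the corollary is obtained by applying Lemma~\ref{lem:lpqembed} to the fixed sequence $\big(w_j(\varphi_j\ast f)\big)_j$ with the same admissible system throughout, and the identity $B^\w_{\px,\px}=F^\w_{\px,\px}$ follows by specializing $q=p$. No further comment is needed.
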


To complete the picture at the basic embeddings level, we give one more result which generalizes various results that can be found in the literature for different types of function spaces.

\begin{proposition}\label{pro:embed1}
Let $\w\in\W$, $\textbf{v}\in\mathcal{W}^{\beta}_{\beta_1,\beta_2}$, $p,q_0,q_1\in\PP$ and $\tfrac{1}{q^\ast}:= \Big(\tfrac{1}{q^-_1}-\tfrac{1}{q^+_0}\Big)_+$.\\ If $\big(\tfrac{v_j}{w_j}\big)_j\in \ell_{q^\ast}(L_\infty)$ when $A=B$ or
$\big(\tfrac{v_j}{w_j}\big)_j\in L_\infty(\ell_{q^\ast})$ and $p^+,q_0^+,q_1^+ < \infty$ when $A=F$, then
\begin{equation}\label{embed1}
A^{\w}_{\px,\qzx} \hookrightarrow A^{\textbf{v}}_{\px,\qumx}.
\end{equation}
\end{proposition}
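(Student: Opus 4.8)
The plan is to derive both embeddings from the pointwise factorisation $v_j(\varphi_j\ast f)(x)=\tfrac{v_j(x)}{w_j(x)}\,w_j(\varphi_j\ast f)(x)$ combined with a Hölder-type inequality in the relevant mixed sequence space. Writing $g_j:=w_j(\varphi_j\ast f)$ and $\beta_j(x):=v_j(x)/w_j(x)$, so that $v_j(\varphi_j\ast f)=\beta_jg_j$ and $\|f\,|\,A^{\w}_{\px,\qzx}\|$ is the $\ell_{\qzx}(L_\px)$- (resp.\ $L_\px(\ell_{\qzx})$-) quasi-norm of $(g_j)_j$, the argument will use only the lattice and unit ball properties of the mixed sequence spaces, the iteration identity \eqref{iterated}, the classical Hölder inequality for scalar $\ell$-spaces, and Lemma~\ref{lem:lpqembed}; in particular no regularity of $p,q_0,q_1$ beyond $p,q_0,q_1\in\PP$ is needed. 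The elementary fact used throughout is that for a.e.\ $x$,
\[
\tfrac{1}{q_1(x)}\le\tfrac{1}{q_1^-}\le\tfrac{1}{q^\ast}+\tfrac{1}{q_0^+}\le\tfrac{1}{q^\ast}+\tfrac{1}{q_0(x)}
\]
(with $\tfrac{1}{q^\ast}=0$ when $q^\ast=\infty$); hence, defining $r\in\PP$ by $\tfrac{1}{r(x)}:=\tfrac{1}{q^\ast}+\tfrac{1}{q_0(x)}$, one has $r\le q_1$ pointwise.

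For the $F$-case I would argue pointwise in $x$. Fixing $x$, the classical Hölder inequality for sequences together with the embedding $\ell_{r(x)}\hookrightarrow\ell_{q_1(x)}$ gives
\[
\big\|(\beta_j(x)g_j(x))_j\,\big|\,\ell_{q_1(x)}\big\|\le\big\|(\beta_j(x))_j\,\big|\,\ell_{q^\ast}\big\|\;\big\|(g_j(x))_j\,\big|\,\ell_{q_0(x)}\big\|
\]
for a.e.\ $x$. Since $\big\|(\beta_j(x))_j\,\big|\,\ell_{q^\ast}\big\|\le\|(v_j/w_j)_j\,|\,L_\infty(\ell_{q^\ast})\|=:N$ for a.e.\ $x$, taking $L_\px$-norms of both sides and using the lattice property and the homogeneity of $L_\px$ yields $\|f\,|\,A^{\textbf{v}}_{\px,\qumx}\|\le N\,\|f\,|\,A^{\w}_{\px,\qzx}\|$, which is \eqref{embed1} in the $F$-case; the conditions $p^+,q_0^+,q_1^+<\infty$ enter only to ensure the $F$-spaces are well defined.

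The $B$-case is where the main obstacle lies, precisely because $\ell_\qx(L_\px)$ is not an iterated space and the pointwise-in-$x$ argument is unavailable. I would first pass to a scalar multiplier: since $\beta_j(x)\le\|v_j/w_j\,|\,L_\infty\|=:c_j$ a.e., the lattice property of $\ell_{\qumx}(L_\px)$ gives $\|f\,|\,A^{\textbf{v}}_{\px,\qumx}\|\le\|(c_jg_j)_j\,|\,\ell_{\qumx}(L_\px)\|$, and by \eqref{iterated} (applied with the constant exponent $q^\ast$ and $L_\infty=L_\px$ for $p\equiv\infty$) one has $(c_j)_j\in\ell_{q^\ast}$ with $\|(c_j)_j\,|\,\ell_{q^\ast}\|=\|(v_j/w_j)_j\,|\,\ell_{q^\ast}(L_\infty)\|$. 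It thus remains to prove
\[
\|(c_jh_j)_j\,|\,\ell_{\qumx}(L_\px)\|\lesssim\|(c_j)_j\,|\,\ell_{q^\ast}\|\;\|(h_j)_j\,|\,\ell_{\qzx}(L_\px)\|
\]
for every scalar sequence $(c_j)_j\in\ell_{q^\ast}$ and every sequence $(h_j)_j$ of measurable functions. As $r\le q_1$, Lemma~\ref{lem:lpqembed} gives $\ell_{r(\cdot)}(L_\px)\hookrightarrow\ell_{\qumx}(L_\px)$, so it suffices to bound the left-hand side by $\|(c_jh_j)_j\,|\,\ell_{r(\cdot)}(L_\px)\|$; and because $r$ obeys the \emph{exact} relation $\tfrac{1}{r(x)}=\tfrac{1}{q^\ast}+\tfrac{1}{q_0(x)}$, one has the pointwise factorisation $\lambda^{1/r(x)}=\lambda^{1/q^\ast}\lambda^{1/q_0(x)}$ for every $\lambda>0$. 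Normalising by homogeneity so that $\|(c_j)_j\,|\,\ell_{q^\ast}\|=\|(h_j)_j\,|\,\ell_{\qzx}(L_\px)\|=1$ (whence $\sum_j|c_j|^{q^\ast}\le1$ and $\varrho_{\ell_{\qzx}(L_\px)}((h_j)_j)\le1$ by the unit ball property), I would choose, for each $j$, a near-optimal $\mu_j>0$ with $\varrho_\px(h_j/\mu_j^{1/\qzx})\le1$ and $\sum_j\mu_j\le1+\varepsilon$, and set $\lambda_j:=|c_j|^{q^\ast}+\mu_j$. Then $\lambda_j^{1/q^\ast}\ge|c_j|$ and $\lambda_j^{1/q_0(x)}\ge\mu_j^{1/q_0(x)}$, so $|c_jh_j(x)/\lambda_j^{1/r(x)}|\le|h_j(x)/\mu_j^{1/q_0(x)}|$ a.e.; by monotonicity of $\varrho_\px$ this yields $\varrho_\px(c_jh_j/\lambda_j^{1/r(\cdot)})\le1$, hence $\varrho_{\ell_{r(\cdot)}(L_\px)}((c_jh_j)_j)\le\sum_j\lambda_j\le2+\varepsilon$, and letting $\varepsilon\to0$ and passing from the semimodular to the quasi-norm in the usual way gives $\|(c_jh_j)_j\,|\,\ell_{r(\cdot)}(L_\px)\|\lesssim1$ (the degenerate case $q^\ast=\infty$, i.e.\ $r=q_0$, is immediate from the lattice property and Lemma~\ref{lem:lpqembed}). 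Reading off the constants then gives $\|f\,|\,A^{\textbf{v}}_{\px,\qumx}\|\lesssim\|(v_j/w_j)_j\,|\,Z\|\,\|f\,|\,A^{\w}_{\px,\qzx}\|$ with $Z=\ell_{q^\ast}(L_\infty)$ when $A=B$ and $Z=L_\infty(\ell_{q^\ast})$ when $A=F$, which is \eqref{embed1}. The delicate point, and the only place where the variable mixed sequence space forces extra care, is this $B$-case Hölder inequality: one cannot reduce it to a single pointwise-in-$x$ estimate, and must instead work directly with the defining semimodular and make the choice $\lambda_j=|c_j|^{q^\ast}+\mu_j$ so that the \emph{exact} exponent identity for $r(\cdot)$ can be exploited.
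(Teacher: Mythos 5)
Your proof is correct, but it takes a genuinely different route from the paper's in the $B$-case. The paper first reduces everything to constant fine indices: since $q^\ast$ only involves $q_1^-$ and $q_0^+$, Corollary~\ref{cor:embed1} gives $A^{\w}_{\px,\qzx}\hookrightarrow A^{\w}_{\px,q_0^+}$ and $A^{\w}_{\px,q_1^-}\hookrightarrow A^{\w}_{\px,\qumx}$, after which both cases follow from the classical H\"older inequality in $\ell_q$ --- pointwise in $x$ for $F$, and on the outer (now genuinely iterated, by \eqref{iterated}) $\ell_q$-norm of $\big(\|w_j(\varphi_j\ast f)\,|\,L_\px\|\big)_j$ for $B$. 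You skip this reduction. For $F$ this changes little: your pointwise argument with $r(x)$ is essentially the paper's with $q_0^+,q_1^-$. For $B$, however, you prove a H\"older-type inequality directly in the non-iterated space $\ell_{\qx}(L_\px)$, working at the level of the semimodular \eqref{def:lpqmod} with the competitors $\lambda_j=|c_j|^{q^\ast}+\mu_j$ and the exact splitting $\lambda^{1/r(x)}=\lambda^{1/q^\ast}\lambda^{1/q_0(x)}$. This is sound: the set of admissible $\lambda$ in each infimum is an upper set, so near-optimal $\mu_j$ exist; the passage from a semimodular bound $\le 2+\varepsilon$ to a quasi-norm bound costs only a power $1/r^-$ (finite since $r^+\le q^\ast<\infty$ there); and the degenerate case $q^\ast=\infty$ is handled separately, as you note. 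Your intermediate inequality $\|(c_jh_j)_j\,|\,\ell_{\qumx}(L_\px)\|\lesssim\|(c_j)_j\,|\,\ell_{q^\ast}\|\,\|(h_j)_j\,|\,\ell_{\qzx}(L_\px)\|$ for scalar multipliers is of some independent interest, since no such H\"older inequality for variable $q$ is available off the shelf. What the paper's reduction buys is brevity: because the hypothesis on $(v_j/w_j)_j$ is already phrased through the constant exponent $q^\ast$, nothing is lost by passing to $q_0^+$ and $q_1^-$ first, and the semimodular manipulation you carry out becomes unnecessary.
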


\begin{proof}
Since
\[
A^{\w}_{\px,\qzx} \hookrightarrow A^{\w}_{\px,q_0^+} \ \ \ \ \text{and} \ \ \ \ A^{\w}_{\px,q_1^-} \hookrightarrow A^{\w}_{\px,\qumx}
\]
by Corollary~\ref{cor:embed1}, it suffices to prove the claim for constant exponents $q_0,q_1\in (0,\infty]$. For simplicity, let us write $q_0$ and $q_1$ instead of $q_0^+$ and $q_1^-$, respectively.

We consider only the case $A=F$; the other case can be proved in a similar way by using \eqref{iterated} (recall that here $q$ is constant). Let $\{\varphi_j\}$ be an admissible system.
We consider first the case $q_0\le q_1$ (so that $q^\ast=\infty$). Using the monotonicity of the discrete Lebesgue spaces, we derive
\begin{eqnarray*}
\big\|f\,|\,F^{\textbf{\textit{v}}}_{\px,q_1}\big\| & = & \left\| \big\|(v_j(\varphi_j\ast f))_j\,|\, \ell_{q_1}\big\| \,|\, L_\px \right\| \\
& \le & \Big\| \big\|(w_j^{-1}\,v_j)_j\,|\, \ell_{\infty}\big\| \, \big\|(w_j(\varphi_j\ast f))_j\,|\, \ell_{q_1}\big\| \,\big|\, L_\px \Big\| \\
& \le & \big\|(w_j^{-1}\,v_j)_j\,|\, L_{\infty}(\ell_{\infty})\big\| \, \Big\| \big\|(w_j(\varphi_j\ast f))_j\,|\, \ell_{q_0}\big\| \,\big|\, L_\px \Big\| \\
& = & \big\|(w_j^{-1}\,v_j)_j\,|\, L_{\infty}(\ell_{\infty})\big\| \, \big\|f\,|\,F^{\w}_{\px,q_0}\big\|.
\end{eqnarray*}
If $q_0> q_1$ then we have $q^\ast = \tfrac{q_0q_1}{q_0-q_1}$ when $q_0<\infty$ and $q^\ast = q_1$ if $q_0=\infty$. Let $q:=\tfrac{q_0}{q_1}>1$. The Hölder's inequality and the lattice property of the $L_\px$ space yield
\begin{eqnarray*}
\big\|f\,|\,F^{\textbf{\textit{v}}}_{\px,q_1}\big\| & = & \left\| \left( \sum_{j=0}^\infty \big|w_j^{-1}\,v_j\big|^{q_1} \big|w_j(\varphi_j\ast f)\big|^{q_1}\right)^{1/q_1} \big|\, L_\px \right\| \\
& \le & \Big\| \big\|(w_j^{-1}\,v_j)_j\,|\, \ell_{q_1q'}\big\| \, \big\|(w_j(\varphi_j\ast f))_j\,|\, \ell_{q_1q}\big\| \,\big|\, L_\px \Big\| \\
& \le & \Big\|\big\|(w_j^{-1}\,v_j)_j\,|\,\ell_{q^\ast}\big\|\,|\, L_{\infty} \Big\| \, \Big\| \big\|(w_j(\varphi_j\ast f))_j\,|\, \ell_{q_0}\big\| \,|\, L_\px \Big\| \\
& = & \big\|(w_j^{-1}\,v_j)_j\,|\, L_{\infty}(\ell_{q^\ast})\big\| \, \big\|f\,|\,F^{\w}_{\px,q_0}\big\|,
\end{eqnarray*}
which completes the proof.
\end{proof}

\begin{remark}The main arguments used in the previous proof rely on \cite[Proposition~1.1.13]{Mou01} (see also \cite[p.~273]{CaeF06}).
The case $A=B$ with constant exponents $q_0,q_1$ was studied in \cite[Theorem~2.10]{MouNS13} where the embedding $B^{\w}_{\px,q_0} \hookrightarrow B^{\textbf{\emph{v}}}_{\px,q_1}$ was proved via atomic decompositions, under the assumptions $\tfrac{v_j(x)}{w_j(x)} \lesssim 1$ for $q_0\le q_1$ and $\tfrac{v_j(x)}{w_j(x)} \lesssim 2^{-j\varepsilon}$ (for some $\epsilon>0$) for arbitrary $q_0, q_1 \in (0,\infty]$. Even in this particular situation, the hypothesis in \cite[Theorem~2.10]{MouNS13} is stronger than ours when $q_0>q_1$, since it implies $\big(\tfrac{v_j}{w_j}\big)_j\in \ell_{r}(L_\infty)$ for any $r\in (0,\infty]$. Similar comments are valid to the corresponding result for the case $A=F$ which was recently studied in \cite[Theorem~2.11]{GonMN14}.
\end{remark}

From the above proposition we immediately get the following embedding for spaces with generalized smoothness:
$$ A^{\sigma}_{\px,\qzx} \hookrightarrow A^{\tau}_{\px,\qumx}$$
for admissible sequences $\sigma$ and $\tau$ satisfying $( \sigma_j^{-1}\tau_j)_j \in \ell_{q^\ast}$. For constant exponents $p,q_0,q_1$, this result is contained in \cite[Theorem~3.7]{CaeF06} and \cite[Proposition~2.11]{CaeL13} in the cases $A=B$ and $A=F$, respectively.

For function spaces of variable smoothness, with $\w=( 2^{js_0(x)} )_j$ and $\textbf{\emph{v}}= ( 2^{js_1(x)} )_j$, the embedding \eqref{embed1} can be written as
$$ A^{s_0(\cdot)}_{\px,\qzx} \hookrightarrow A^{s_1(\cdot)}_{\px,\qumx},$$
supposing $( 2^{j(s_1(x)-s_0(x))} )_j \in \ell_{q^\ast}(L_{\infty})$ in the case $A=B$, or $( 2^{j(s_1(x)-s_0(x))} )_j \in L_{\infty}(\ell_{q^\ast})$ when $A=F$. It can be checked that both conditions are equivalent to the assumptions $(s_0-s_1)^-\ge 0$ for $q^\ast = \infty$ and $(s_0-s_1)^-> 0$ for $q^\ast <\infty$. Notice that the condition $(s_0-s_1)^-> 0$ ensures that the embedding above holds indeed for any $q_0,q_1 \in \PP$.
For $B$ spaces, this later statement was shown in \cite[Theorem~6.1(ii)]{AlmH10}. When $q_0\le q_1$ the assumption $(s_0-s_1)^-\ge 0$ can be seen as an improvement of \cite[Theorem~6.1(i)]{AlmH10}, since now the smoothness parameter may change.

\begin{theorem}\label{pro:embed-SSprime}
Let $\w\in\W$ and $p,q\in\PPlog$ (with $\max\{p^+,q^+\}<\infty$ in the $F$ case). Then
\begin{equation} \label{embed-SSprime}
\cS \hookrightarrow \A \hookrightarrow \cS'.
\end{equation}
\end{theorem}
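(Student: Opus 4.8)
The plan is to prove the two embeddings in \eqref{embed-SSprime} separately, using the Peetre maximal function characterization (Theorem~\ref{thm:Peetre}) together with the two-sided control of the admissible weights from Definition~\ref{def:weights}. For the first embedding $\cS \hookrightarrow \A$, I would fix $f\in\cS$ and an admissible system $\{\varphi_j\}$, and estimate the building blocks $w_j(\varphi_j\ast f)$ pointwise. The key point is that for $f\in\cS$ the moment conditions built into the admissible system give, for any prescribed $M>0$ and $a>0$, a bound of the form $|\varphi_j\ast f(x)| \lesssim 2^{-jM}\,\eta_{j,a}(x)$ up to constants depending on finitely many Schwartz seminorms of $f$ (this is the standard decay-of-convolution estimate: $\hat\varphi_j$ vanishes to infinite order at the origin for $j\ge 1$, and $\hat f$ is Schwartz). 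Combining this with property (ii) of the weight class, which yields $w_j(x) \le w_0(x)\,2^{j\alpha_2} \lesssim (1+|x|)^\alpha\,2^{j\alpha_2}$ after also invoking property (i) to compare $w_0(x)$ with $w_0(0)$, one gets $w_j(x)\,|\varphi_j\ast f(x)| \lesssim 2^{-j(M-\alpha_2)}\,(1+|x|)^{-N}$ for $N$ as large as we like, with constants controlled by Schwartz seminorms of $f$. Choosing $M$ large enough (relative to $\alpha_2$ and to $q^-$) and $N$ large enough (relative to $n$, $p^-$, $p^+$), the summation in $j$ and the $L_\px$-integration both converge, giving $\|f\,|\A\| \lesssim \|f\,|\cS\|$ with a fixed finite set of seminorms; this is continuity of the embedding. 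The $B$ and $F$ cases are handled by the same pointwise bound, differing only in whether one sums first in $j$ or integrates first in $x$, and both reduce to estimating $\|(1+|\cdot|)^{-N}\,|L_\px\|<\infty$, which holds for $N$ large by \eqref{Lp-norm-mod} and a decomposition of $\Rn$ into dyadic annuli.

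For the second embedding $\A \hookrightarrow \cS'$, I would show that for every $\phi\in\cS$ there is a seminorm estimate $|\langle f,\phi\rangle| \lesssim \|f\,|\A\|\cdot p_{\cS}(\phi)$. Writing $f = \sum_j \varphi_j\ast f$ (convergence in $\cS'$ for the admissible system, by the same reasoning as in the classical case), it suffices to control $\sum_j |\langle \varphi_j\ast f,\phi\rangle|$. The idea is to move the information to a pointwise bound on $\varphi_j\ast f$: from the definition of the quasi-norm and the unit ball property, one can extract $\|w_j(\varphi_j\ast f)\,|L_\px\| \lesssim 2^{j\delta}\,\|f\,|\A\|$ for a suitable $\delta$ depending on $\alpha_1,\alpha_2$ (using property (ii) of the weights, $w_j \ge 2^{j\alpha_1} w_0$, to bound the whole-sequence quasi-norm from below by a single term, losing at most an exponential factor in $j$), and then pass to $L_\infty$ via a Nikolskii-type inequality exploiting the compact Fourier support $\supp\widehat{\varphi_j}\subset\{|\xi|\le 2^{j+1}\}$: $\|\varphi_j\ast f\,|L_\infty\|\lesssim 2^{jn/p^+}\|\varphi_j\ast f\,|L_\px\|$ on unit cubes, glued together with polynomial weight using property (i) of $\w$ to absorb the $w_j$. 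Alternatively, and perhaps more cleanly, I would test directly: $\langle\varphi_j\ast f,\phi\rangle = \langle f,\widetilde{\varphi_j}\ast\phi\rangle$, and since $\widetilde{\varphi_j}\ast\phi$ decays rapidly with constants gaining a factor $2^{-jM}$ for each $j\ge 1$ (again by the vanishing moments), the pairing $|\langle \varphi_j\ast f,\phi\rangle|$ is dominated by $\|w_j(\varphi_j\ast f)\,|L_\px\|$ times the $L_{p'(\cdot)}$-norm of $w_j^{-1}$ times rapidly decaying terms; using the weight bounds to control $w_j^{-1}$ polynomially and summing in $j$ gives the estimate.

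The main obstacle, and the place where care is genuinely needed, is the lower bound for the single-term contribution in terms of the full mixed-sequence quasi-norm — i.e.\ showing $\|w_{j_0}(\varphi_{j_0}\ast f)\,|L_\px\| \lesssim 2^{j_0\delta}\|(w_j(\varphi_j\ast f))_j\,|\ell_\qx(L_\px)\|$ — because $\ell_\qx(L_\px)$ is not a genuinely iterated space when $q$ is non-constant and does not satisfy the naive triangle inequality; one must argue through the semimodular $\varrho_{\ell_\qx(L_\px)}$ and the unit ball property (as recorded in Section~\ref{sec:prelim}) rather than through a norm estimate on coordinate projections. In the $F$ case the analogous step uses $L_\px(\ell_\qx)$, which is better behaved, but still requires extracting a single coordinate, which costs at most a constant there. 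Once this extraction is in hand, everything else is the standard interplay between Schwartz decay, compact spectral support, and the polynomial two-sided control of $w_j$, none of which interacts badly with variable exponents.
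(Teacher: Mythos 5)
Your overall architecture is sound and your Step 1 is genuinely different from (and in one respect cleaner than) the paper's. The paper does \emph{not} use the $2^{-jM}$ decay coming from the vanishing moments of $\varphi$; it only uses $|\varphi_j\ast f(x)|\lesssim (1+|x|)^{-N}p_N(f)$ with no gain in $j$, reduces first to $B^{\w}_{\px,\infty}$ via Proposition~\ref{pro:embed1}, and is then forced to assume $\alpha_2\le 0$ and to remove that restriction afterwards with the lifting operator $I_\sigma$ (Theorem~\ref{thm:lift}). Your moment-condition estimate $|\varphi_j\ast f(x)|\lesssim 2^{-jM}(1+|x|)^{-N}$ makes the factor $2^{j\alpha_2}$ harmless for $M$ large and so avoids the lifting step entirely; the summation in $j$ is then handled, as you say, through $\ell_{q^-}(L_\px)\hookrightarrow \ell_\qx(L_\px)$ (resp. $\ell_{\min\{\px,\qx\}}(L_\px)\hookrightarrow L_\px(\ell_\qx)$) from Lemma~\ref{lem:lpqembed}, and everything reduces to $\|(1+|\cdot|)^{-N}\,|\,L_\px\|<\infty$ exactly as in the paper. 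Also note that your worry about extracting a single coordinate from $\ell_\qx(L_\px)$ is unfounded: by the lattice property together with the one-nonzero-entry observation in Section~\ref{sec:prelim}, $\|w_{j_0}(\varphi_{j_0}\ast f)\,|\,L_\px\|\le \|f\,|\,\B\|$ with constant $1$ and no exponential loss.

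The one genuine gap is in your ``cleaner alternative'' for $\A\hookrightarrow\cS'$. The duality step $|\langle \varphi_j\ast f,\phi\rangle|\lesssim \|w_j(\varphi_j\ast f)\,|\,L_\px\|\,\|w_j^{-1}(\widetilde{\varphi_j}\ast\phi)\,|\,L_{p'(\cdot)}\|$ relies on H\"older's inequality in $L_\px$, which the paper (and the underlying theory) only provides for $p^-\ge 1$; for $p^-<1$ there is no such pairing estimate, and indeed $L_\px$ need not embed locally into $L_1$. This is precisely why the paper's Step 2 does not test directly but instead tessellates $\Rn$ into dyadic cubes $Q_{jm}$, replaces $|\Theta_j^\vee\ast f|$ on each cube by the Peetre maximal function $(\Theta_j^{\vee\ast}f)_a$, raises to a power $r\le p^-$, and applies H\"older with the exponent $\px/r\ge 1$ together with \eqref{norm-charact-func-cubes} and Theorem~\ref{thm:Peetre}. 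Your primary (Nikolskii-type) route is the right idea, but once you make it precise for $p^-<1$ you will need exactly this $r$-trick and the maximal-function characterization, so it converges to the paper's argument rather than simplifying it; as stated, the exponent $2^{jn/p^+}$ and the gluing over unit cubes are too vague to check. Finally, be aware that the paper also needs a second application of the lifting property at the end of its Step 2 (to remove the restriction $\alpha_1>1+n/p^-$); in your version the $2^{-jM}$ gain from the test function makes this unnecessary, which is fine, but only once the pairing for $p^-<1$ has been repaired along the lines above.
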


%

\begin{proof}
By Corollary~\ref{cor:embed1} (ii) it is enough to show that $\cS \hookrightarrow \B \hookrightarrow \cS'$. We shall prove these embeddings using the lifting property. The proof of the embedding into $\cS'$ will also require, in particular, the use of the maximal characterization given by Theorem~\ref{thm:Peetre}.

\emph{Step 1}: We show that $\cS \hookrightarrow \B$. If we take $v_j(x):=2^j w_j(x)$, then $(w_jv_j^{-1})=(2^{-j})_j \in \ell_{q^-}(L_\infty)$ so that $B^{\textbf{\emph{v}}}_{\px,\infty} \hookrightarrow \B$  (by Proposition~\ref{pro:embed1}). Hence it remains to show that $\cS$ is continuously embedded into $B^\w_{\px,\infty}$ (for given $\w\in\W$ and $p\in\PPlog$), that is, one needs to show that there exists $N\in\N$ such that
\begin{equation}\label{embed-S-Binfty}
\sup_{j\in\N_0} \big\| w_j(\varphi_j\ast f)\,|\, L_\px\big\| \lesssim p_N(f),
\end{equation}
for any $f\in\cS$, where
$$\displaystyle p_N(f):=\sup_{x\in\Rn} (1+|x|)^N \sum_{|\gamma|\leq N}|D^\gamma f(x)|.$$
By straightforward calculations we find that
$$
|\varphi_j\ast f(x)| \lesssim (1+|x|)^{-N} p_N(f), \ \ \ \ x\in\Rn,
$$
(with the implicit constant depending only on $n$, $N$ and on the fixed system $\{\varphi_j\}$). From the properties of the admissible weights, we also have
$$
w_j(x) \, |\varphi_j\ast f(x)| \lesssim 2^{j\alpha_2} w_0(0)\, (1+|x|)^{\alpha-N} p_N(f), \ \ \ \ x\in\Rn, \ \ j\in\Nz, \ \ f\in\cS.
$$
Since $(1+|\cdot|)^{\alpha-N} \in L_\px$ if $(N-\alpha)p^->n$, we get \eqref{embed-S-Binfty} as long as $\alpha_2 \leq 0$ and $N>\alpha + n/p^-$. Nevertheless, the restriction on $\alpha_2$ can be overcome using the lifting property in the following way: take $\sigma>0$ large enough such that $\alpha_2 -\sigma \leq 0$. The result then follows from the composition of continuous maps:
$$
\cS \xrightarrow{\; I_\sigma\;} \cS \xrightarrow{\; id\;} B^{(-\sigma)\w}_{\px,\infty}  \xrightarrow{\;I_{-\sigma}\;} B^\w_{\px,\infty}.
$$

\emph{Step 2}: We prove the right-hand side in \eqref{embed-SSprime}. Once again (cf. Corollary~\ref{cor:embed1} (i)) it suffices to show that the space $B^\w_{\px,\infty}$ is embedded into $\cS'$. We want to prove that, for every $\psi\in\cS$, there exists a constant $c_\psi>0$ such that
\begin{equation}\label{aux}
|\langle f,\psi\rangle| \leq c_\psi \|f\,|\, B^\w_{\px,\infty}\|,  \ \ \ \ \ \forall f\in B^\w_{\px,\infty}.
\end{equation}
Let $\{\Theta_j\}$ be a system as in the proof of Theorem~\ref{thm:lift} and such that, moreover, $\Theta_0(x)$ is radially strictly decreasing for $|x|\in[1,2]$. Since for any $f\in\cS'$ the identity $\sum_{j\ge 0} \Theta_j \hat{f} = \hat{f}$ holds in $\cS'$, one has
$$
|\langle f,\psi\rangle| \leq \sum_{j\ge 0} |\langle \Theta_j^\vee\ast f, \psi\rangle|.
$$
Given $j\in\Nz$ and $m\in\Zn$, let $Q_{jm}$ denote the closed dyadic cube with sides parallel to the coordinate axes, centered at $2^{-j}m$ and with side length $2^{-j}$. Observing that the collection $\{Q_{jm}\}_{m\in\Zn}$ forms a tessellation of $\Rn$ for each $j\in\Nz$, we get, for each $N\in\N$, the estimate
$$
|\langle \Theta_j^\vee\ast f, \psi\rangle| \lesssim \sum_{m\in\Zn} 2^{jN}(1+|m|)^{-N} (1+|2^{-j}m|)^{-\alpha} \int_{Q_{jm}} |(\Theta_j^\vee\ast f)(x)|\,dx, \ \ j\in\Nz, \ f\in\cS',
$$
where we took advantage of the estimate $1+|y| \approx 1+|2^{-j}m|$ for $y\in Q_{jm}$. Since $(1+|2^{-j}m|)^{-\alpha} \lesssim w_0(y)$ for $y\in Q_{jm}$ (recall Definition~\ref{def:weights} (i)), we also get
\begin{eqnarray*}
|\langle \Theta_j^\vee\ast f, \psi\rangle| & \lesssim & \sum_{m\in\Zn} 2^{jN}(1+|m|)^{-N} \inf_{y\in Q_{jm}} w_0(y) \int_{Q_{jm}} |(\Theta_j^\vee\ast f)(x)|\,dx\\
& \lesssim & \sum_{m\in\Zn} 2^{jN}(1+|m|)^{-N} |Q_{jm}| \,\inf_{y\in Q_{jm}} \left\{w_0(y) \big( \Theta_j^{\vee\ast} f\big)_a (y)\right\}\\
& \lesssim & \sum_{m\in\Zn} 2^{j(N-n)}(1+|m|)^{-N} |Q_{jm}|^{-1/r} \left( \int_{Q_{jm}} \big[w_0(x) \big( \Theta_j^{\vee\ast} f\big)_a (x)\big]^r dx\right)^{1/r}
\end{eqnarray*}
for any positive numbers $r$ and $a$. Taking $r\leq p^-$, from Hölder's inequality with exponent $\tilde{p}(\cdot):=\px/r$ we see that the last integral power is dominated by
$$
2^{1/r} \left\| \big[w_0 \big( \Theta_j^{\vee\ast} f\big)_a\big]^r \,|\, L_{\tilde{p}(\cdot)}\right\|^{1/r} \left\| \chi_{jm} \,|\, L_{\tilde{p}^\prime(\cdot)}\right\|^{1/r}
$$
where $\chi_{jm}$ denotes the characteristic function of the cube $Q_{jm}$. Since $\tilde{p}\in\PPlog$, by \eqref{norm-charact-func-cubes} we have
$$
\left\| \chi_{jm} \,|\, L_{\tilde{p}^\prime(\cdot)}\right\| \lesssim | Q_{jm} |^{1-r/p^-}.
$$
Using this above, we estimate
\begin{equation}\label{aux1}
|\langle \Theta_j^\vee\ast f, \psi\rangle|  \lesssim 2^{j(N-n+n/p^-)} \sum_{m\in\Zn} (1+|m|)^{-N} \left\| w_0 \big( \Theta_j^{\vee\ast} f\big)_a \,|\, L_{\px}\right\|.
\end{equation}
Taking $a>\alpha+n/p^-$, by Theorem~\ref{thm:Peetre} (now with $\psi_j=\Theta^\vee_j$) we have the equivalence
$$ \left\| \big(w_j \big( \Theta_j^{\vee\ast} f\big)_a\big)_j \,|\, \ell_\infty(L_{\px})\right\| \approx \|f\,|\, B^\w_{\px,\infty}\|.$$
But $w_0(x) \leq 2^{-j\alpha_1}w_j(x)$ for every $j\in\Nz$ and $x\in\Rn$, so that
$$
\left\| w_0 \big( \Theta_j^{\vee\ast} f\big)_a \,|\, L_{\px}\right\| \lesssim 2^{-j\alpha_1} \|f\,|\, B^\w_{\px,\infty}\|, \ \ \ \ \forall j\in\Nz.
$$
Using this in \eqref{aux1} we get
$$
|\langle \Theta_j^\vee\ast f, \psi\rangle|  \lesssim 2^{j(N-n-\alpha_1+n/p^-)}\, \|f\,|\, B^\w_{\px,\infty}\|
$$
if we take $N>n$. Therefore inequality \eqref{aux} follows if we are careful in choosing $a>0$ and $n\in\N$ satisfying
$$
a>\alpha+n/p^-  \ \ \ \ \ \textrm{and} \ \ \ \ \ n<N<\alpha_1+n-n/p^-.
$$
Such a choice is always possible when $\alpha_1>1+n/p^-$. If this is not the case, then we can choose $\sigma <0$ small enough such that
$\alpha_1-\sigma>1+n/p^-$. Then the result above can be applied to the space $B^{(-\sigma)\w}_{\px,\infty}$ and from the lifting property we get the claim for the general case (similarly to what was done in Step 1).
\end{proof}

\begin{remark}
Step 2 of the proof above was inspired by the proof of \cite[Theorem~3.14]{LSUYY13}. There is an alternative proof of embeddings \eqref{embed-SSprime} which takes advantage of Sobolev type embeddings for Besov spaces  with constant $q$ (cf. \cite[Theorem~2.11]{MouNS13}, \cite[Theorem~2.13]{GonMN14}). We refer to our paper \cite{AlmC15b} for the topic of Sobolev type embeddings in full generality.
\end{remark}

%
%


\section{Fourier multipliers}\label{sec:Fmultipliers}

Fourier multipliers constitute an important tool from the point of view of applications to partial differential equations. By a Fourier multiplier for $\A$ we mean a function (or a tempered distribution) $m$ for which
\begin{equation*}
\big\|\big(m\,\hat{f}\big)^\vee\,|\A\big\| \lesssim \|f\,|\A\|\,, \ \ \ \ \ f\in\A.
\end{equation*}

If $m$ is a $C^\infty$ function  such that it and all of its derivatives are at most of polynomial growth on $\Rn$ (i.e., they are essentially dominated by powers of the type  $(1+|\cdot|)^M$ with $M \geq 0$), then $\big(m\,\hat{f}\big)^\vee$ makes sense for any $f\in\cS'$.

Our first result concerning Fourier multipliers (involving that class of functions) can be proved following the proof of Theorem~\ref{thm:lift}, replacing $(1+|z|^2)^{\sigma/2}$ by $m(z)$ and using the estimate
\begin{equation*}
\left| D^{\gamma} m(z) \right| \le  \Big( \sup_{x\in\Rn} (1+|x|^2)^{\frac{|\gamma|}{2}} \,|D^{\gamma} m(x)|\Big)\,(1+|z|^2)^{-\frac{|\gamma|}{2}}
\end{equation*}
instead of \eqref{est-derivative}. Hence the dependence on $\sigma$ over there should be replaced along the proof by
$$
\|m\|_{2l}:= \sup_{|\gamma| \le 2l} \sup_{x\in\Rn} (1+|x|^2)^{\frac{|\gamma|}{2}} \, |D^\gamma m(x)|\,,
$$
where $l\in\N$ satisfies now  $a+n+\varepsilon=2l$ for some $\varepsilon >0$ (playing the role of the $1$ in \eqref{choose-a}) and $a>\alpha+ \tfrac{n}{\min\{p^-,q^-\}}+c_{\log}(1/q)$. We leave the details to the reader.

\begin{theorem}\label{thm:Fmultiplier1}
Let $\w\in\W$ and $p,q\in\PPlog$ ($\max\{p^+,q^+\}<\infty$ in the $F$ case). Let $l\in\N$ be such that
\begin{equation*}
2l > \alpha+\frac{n}{p^-} + n + c_{\log}(1/q)\ \ \textrm{(in the $B$ case)} \ \ \ \text{or} \ \ \  2l > \alpha+ \tfrac{n}{\min\{p^-,q^-\}}+n \ \ \textrm{(in the $F$ case)}.
\end{equation*}
Then there exists $c>0$ such that
\begin{equation*}
\big\|\big(m\,\hat{f}\big)^\vee\,|\A\big\| \leq c\, \|m\|_{2l}\, \|f\,|\A\|
\end{equation*}
for all $m$ as considered before and all  $f\in\A$.
\end{theorem}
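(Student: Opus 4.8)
The plan is to follow, essentially step by step, the proof of Theorem~\ref{thm:lift}, performing the substitutions announced just before the statement: the symbol $(1+|\cdot|^2)^{\sigma/2}$ is replaced by the multiplier $m$, the bound \eqref{est-derivative} is replaced by $|D^{\gamma}m(z)|\le\|m\|_{2l}(1+|z|^2)^{-|\gamma|/2}$ (valid for $|\gamma|\le 2l$, this being just the definition of $\|m\|_{2l}$), and one takes $a=2l-n-\varepsilon$ for a small $\varepsilon>0$ in place of \eqref{choose-a}. Concretely, I would fix an admissible system $\{\varphi_j\}$ and reuse the auxiliary resolution of unity $\{\Theta_k\}$ from the proof of Theorem~\ref{thm:lift}, with the convention $\Theta_{-1}\equiv 0$. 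Since $\hat\varphi_j$ is supported where $\sum_{k=-1}^{1}\Theta_{j+k}\equiv 1$, and since the polynomial growth of $m$ makes $(m\hat f)^{\vee}$ a well-defined tempered distribution, one obtains for $f\in\cS'$ the identity (up to a fixed multiplicative constant)
\[
\varphi_j\ast(m\hat f)^{\vee}=\Big[m\sum_{k=-1}^{1}\Theta_{j+k}\Big]^{\vee}\ast\varphi_j\ast f,
\]
and hence, exactly as in the derivation of \eqref{aux7}, the pointwise estimate
\[
\big|\big(\varphi_j\ast(m\hat f)^{\vee}\big)(x)\big|\le\big(\varphi^{\ast}_{j}f\big)_{a}(x)\int_{\Rn}\Big|\Big[m\sum_{k=-1}^{1}\Theta_{j+k}\Big]^{\vee}(y)\Big|\big(1+|2^{j}y|^{a}\big)\,dy,
\]
valid for any $a>0$.

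The core of the proof is to show that this last integral is bounded by $c\,\|m\|_{2l}$, with $c$ independent of $x$, $j$ and $f$, once $a=2l-n-\varepsilon$. Following Theorem~\ref{thm:lift}, I would split $1+|2^{j}y|^{a}\lesssim(1+|2^{j}y|^{a+n+\varepsilon})(1+|2^{j}y|^{n+\varepsilon})^{-1}$, so that the denominator, together with the factor $2^{jn}$ arising from the dyadic localisation of $[m\sum_{k}\Theta_{j+k}]^{\vee}$, yields the $\eta$-type kernel $2^{jn}(1+|2^{j}y|^{n+\varepsilon})^{-1}$, whose $L_1$-norm depends only on $n$ and $\varepsilon$. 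For the numerator, combining the trivial $L_1$-bound of the integrand with $a+n+\varepsilon=2l$ integrations by parts and the Leibniz rule applied to each product $m\,\Theta_{j+k}$, where every derivative landing on $\Theta_{j+k}$ costs a factor $2^{-j}$ and every derivative landing on $m$ costs, on the dyadic annulus $|\xi|\approx 2^{j}$ that carries the support, a factor $\|m\|_{2l}\,2^{-j}$, one obtains
\[
\Big|\Big[m\sum_{k=-1}^{1}\Theta_{j+k}\Big]^{\vee}(y)\Big|\,\big(1+|2^{j}y|^{a+n+\varepsilon}\big)\lesssim\|m\|_{2l}\,2^{jn},
\]
and the factor $2^{jn}$ is then absorbed by the $L_1$-norm of the above kernel. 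For $j=0$ the function $m(\Theta_{0}+\Theta_{1})$ is $C^{\infty}$ with compact support, so its inverse Fourier transform is Schwartz and the integral is trivially $\lesssim\|m\|_{2l}$. The outcome is
\[
\big|\big(\varphi_j\ast(m\hat f)^{\vee}\big)(x)\big|\lesssim\|m\|_{2l}\,\big(\varphi^{\ast}_{j}f\big)_{a}(x),\qquad x\in\Rn,\ j\in\Nz,\ f\in\cS'.
\]

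To conclude, the lower bounds on $2l$ in the hypothesis allow one to choose $\varepsilon>0$ so small that $a=2l-n-\varepsilon$ still satisfies $a>\alpha+\tfrac{n}{p^-}+c_{\log}(1/q)$ in the $B$ case, respectively $a>\alpha+\tfrac{n}{\min\{p^-,q^-\}}$ in the $F$ case. Multiplying the displayed pointwise estimate by $w_j$, taking the quasi-norm in $\ell_\qx(L_\px)$ (in the $B$ case) or in $L_\px(\ell_\qx)$ (in the $F$ case), using the lattice property of these spaces, and finally invoking the Peetre maximal characterisation of Theorem~\ref{thm:Peetre}, one arrives at $\big\|(m\hat f)^{\vee}\,|\,\A\big\|\lesssim\|m\|_{2l}\,\|f\,|\,\A\|$, which is the assertion (trivially true anyway when $\|m\|_{2l}=\infty$). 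The main obstacle is precisely the uniform-in-$j$ integral estimate of the second paragraph: one must organise the integration by parts so that the negative powers of $2^{j}$ produced by the derivatives of $m$ and of $\Theta_{j+k}$ on the annulus $|\xi|\approx 2^{j}$ cancel exactly the positive power $2^{ja}$ hidden in the weight $1+|2^{j}y|^{a}$ after rescaling; everything else is the bookkeeping already carried out in the proof of Theorem~\ref{thm:lift}.
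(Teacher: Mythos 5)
Your proposal is correct and follows essentially the same route as the paper, which itself only indicates that one should repeat the proof of Theorem~\ref{thm:lift} with $(1+|z|^2)^{\sigma/2}$ replaced by $m(z)$, the bound \eqref{est-derivative} replaced by $|D^{\gamma}m(z)|\le \|m\|_{2l}\,(1+|z|^2)^{-|\gamma|/2}$, and $a+n+\varepsilon=2l$ in place of \eqref{choose-a}. The details you supply --- the integration-by-parts/Leibniz estimate giving the uniform bound $c\,\|m\|_{2l}$ for the integral, and the choice of $\varepsilon$ small enough that $a=2l-n-\varepsilon$ stays above the threshold required by Theorem~\ref{thm:Peetre} --- are exactly those the paper leaves to the reader.
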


The particular case $w_j(x)=2^{js(x)}$, corresponding to spaces with variable smoothness, was recently studied in \cite{Noi14}, but only bounded exponents were considered there (even in the $B$ space). The proofs over there follow the arguments from \cite[Theorem~2.3.7]{Tri83} where the constant exponent case is treated.

Recalling the notation from Theorem~\ref{thm:lift} we get the following corollary:

\begin{corollary}
Let $\w\in\W$ and $p,q\in\PPlog$ (with $\max\{p^+,q^+\}<\infty$ in the $F$ case). For any $\gamma \in\mathbb{N}_0^n$ the differentiation operator $D^\gamma$ is continuous from $A^{(|\gamma|)\w}_{\px,\qx}$ into $\A$.
\end{corollary}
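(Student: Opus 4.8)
The plan is to deduce the corollary from the Fourier multiplier theorem (Theorem~\ref{thm:Fmultiplier1}) combined with the lifting property (Theorem~\ref{thm:lift}), since differentiation is, up to the lifting operators, a Fourier multiplier operation. Concretely, for $\gamma\in\N_0^n$ the operator $D^\gamma$ acts on the Fourier side as multiplication by the polynomial $(i\xi)^\gamma$, so $D^\gamma f = \big( (i\cdot)^\gamma \hat f\big)^\vee$ for $f\in\cS'$. The function $\xi\mapsto (i\xi)^\gamma$ is $C^\infty$ with all derivatives of polynomial growth, so Theorem~\ref{thm:Fmultiplier1} applies to it, but it is not a \emph{bounded} multiplier: it does not map $\A$ into itself, rather it shifts the weight sequence. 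The idea is therefore to write $(i\xi)^\gamma = (i\xi)^\gamma (1+|\xi|^2)^{-|\gamma|/2} \cdot (1+|\xi|^2)^{|\gamma|/2}$, where the first factor $m_\gamma(\xi):=(i\xi)^\gamma(1+|\xi|^2)^{-|\gamma|/2}$ is a genuine (bounded, smooth, all derivatives of polynomial — in fact bounded — growth) Fourier multiplier, and the second factor is exactly the symbol of the lifting operator $I_{|\gamma|}$.

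First I would record the operator identity $D^\gamma = T_{m_\gamma}\circ I_{|\gamma|}$, where $T_{m_\gamma}f := (m_\gamma\hat f)^\vee$; this is immediate from the definitions by comparing Fourier transforms, valid on all of $\cS'$. Next I would check that $m_\gamma$ is a $C^\infty$ function all of whose derivatives are at most of polynomial growth — in fact $m_\gamma$ and all its derivatives are bounded, by the Leibniz rule together with the standard estimate $|D^\beta (1+|\xi|^2)^{-|\gamma|/2}|\lesssim (1+|\xi|^2)^{-|\gamma|/2-|\beta|/2}$ (the analogue of \eqref{est-derivative}) — so that Theorem~\ref{thm:Fmultiplier1} yields that $T_{m_\gamma}$ is bounded on $\A$ for $p,q\in\PPlog$ (with $\max\{p^+,q^+\}<\infty$ in the $F$ case), noting that the required order $2l$ of derivatives to control is finite and the quantity $\|m_\gamma\|_{2l}$ is finite. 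Then I would invoke Theorem~\ref{thm:lift} with $\sigma=|\gamma|$: $I_{|\gamma|}$ maps $A^{(|\gamma|)\w}_{\px,\qx}$ isomorphically onto $A^{(-|\gamma|)(|\gamma|)\w}_{\px,\qx}=\A$ (since $(-|\gamma|)(|\gamma|)\w=\w$), and in particular continuously. Composing the two continuous maps
\[
A^{(|\gamma|)\w}_{\px,\qx} \xrightarrow{\;I_{|\gamma|}\;} \A \xrightarrow{\;T_{m_\gamma}\;} \A
\]
gives that $D^\gamma = T_{m_\gamma}\circ I_{|\gamma|}$ is continuous from $A^{(|\gamma|)\w}_{\px,\qx}$ into $\A$, which is the claim.

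One should also verify that $(|\gamma|)\w\in\W$ so that the space $A^{(|\gamma|)\w}_{\px,\qx}$ is meaningful and Theorem~\ref{thm:lift} applies: by the computation in the proof of Theorem~\ref{thm:lift}, $(\sigma)\w\in\mathcal{W}^\alpha_{\alpha_1+\sigma,\alpha_2+\sigma}$ for any $\sigma\in\R$ (there stated for $-\sigma$), so $(|\gamma|)\w\in\mathcal{W}^\alpha_{\alpha_1+|\gamma|,\alpha_2+|\gamma|}$ indeed lies in some class $\W$, with the same $\alpha$, hence the same $\log$-Hölder requirements on $p,q$ suffice. I do not expect any serious obstacle here: the only mildly delicate point is making sure the multiplier $m_\gamma$ (not just the polynomial $(i\xi)^\gamma$ alone) genuinely satisfies the hypotheses of Theorem~\ref{thm:Fmultiplier1} and that the order $2l$ one needs is finite and fixed — this is routine since $m_\gamma$ and all its derivatives are actually bounded. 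The statement also makes transparent why the weight must be shifted down by $|\gamma|$: differentiation costs $|\gamma|$ derivatives, i.e.\ a factor $2^{j|\gamma|}$ in smoothness weight, which is exactly what passing from $A^{(|\gamma|)\w}_{\px,\qx}$ to $\A$ encodes.
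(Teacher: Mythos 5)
Your proposal is correct and is essentially identical to the paper's own proof: the paper likewise factors the symbol as $x^\gamma=m(x)\,(1+|x|^2)^{|\gamma|/2}$ with $m(x):=x^\gamma(1+|x|^2)^{-|\gamma|/2}$, applies Theorem~\ref{thm:Fmultiplier1} to the bounded multiplier $m$ (checking $\|m\|_{2l}<\infty$) and Theorem~\ref{thm:lift} with $\sigma=\kappa=|\gamma|$ to identify $I_{|\gamma|}\colon A^{(|\gamma|)\w}_{\px,\qx}\to\A$ as an isomorphism. There is nothing to add.
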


\begin{proof}
Let $m(x):=x^\gamma (1+|x|^2)^{-\frac{\kappa}{2}}$ with $\kappa\in\Nz$. We can check that $m$ and all of its derivatives are $C^\infty$ functions of at most polynomial growth and, additionally, that if $\kappa \ge |\gamma|$ then $\|m\|_{2l} <\infty$ for all $l\in\N$. In particular we can take $l$ large enough in order to apply Theorem~\ref{thm:Fmultiplier1} and conclude that the above $m$ (with that choice of $\kappa$) is a Fourier multiplier for every space $\A$.\\
Consider now $\kappa = |\gamma|$ and let $f\in A^{(|\gamma|)\w}_{\px,\qx}$ (recall that $(|\gamma|)\w:=(2^{j|\gamma|}w_j)_j$). By Theorem~\ref{thm:lift} we know that $I_\kappa f \in A^{(|\gamma|-\kappa)\w}_{\px,\qx} = \A$ and that $\big\| I_\kappa f \,|\, \A \big\| \approx \big\| f\,|\, A^{(|\gamma|)\w}_{\px,\qx} \big\|$. Thus we get
\begin{eqnarray*}
\big\| D^\gamma f \,|\, \A \big\| & = &\big\| (x^\gamma \, \hat{f}(\cdot))^\vee \,|\, \A \big\| = \big\| \big( m(\cdot)\widehat{I_\kappa f}\big)^\vee \,|\, \A \big\| \\
& \leq & c\, \|m\|_{2l}\, \|I_\kappa f\,|\A\| \approx \big\| f \,|\, A^{(|\gamma|)\w}_{\px,\qx} \big\|,
\end{eqnarray*}
which proves the claim.
\end{proof}

\begin{remark}
As in \cite[pp.~59-60]{Tri83}, from which the argument above is taken, fixing $\kappa\in\N$, for any $\gamma\in\N_0^n$ such that $|\gamma|\le \kappa$, from Proposition~\ref{pro:embed1} and the result above we see that
$$
\left\|  D^\gamma f \,|\, A^{(-k)\w}_{\px,\qx} \right\| \lesssim \left\|  D^\gamma f \,|\, A^{(-|\gamma|)\w}_{\px,\qx} \right\| \lesssim \left\| f \,|\, \A \right\|
$$
and consequently
$$
\sum_{|\gamma|\le \kappa} \left\|  D^\gamma f \,|\, A^{(-\kappa)\w}_{\px,\qx} \right\| \lesssim \left\| f \,|\, \A \right\|.
$$
\end{remark}

It can be shown that $\cS$ is dense in $\A$ when $p,q$ are bounded (and $p,q \in \PPlog$). This statement was formulated in \cite[Theorem~2.13]{GonMN14} for the $F$ case and in \cite[Theorem~2.11]{MouNS13} for the spaces $\B$ for constant $q$ only. The proofs mentioned over there rely on classical arguments from \cite{Tri83} (cf. proof of Theorem~2.3.3). However, we can show that the same result  also holds for variable exponents $q$ in the $B$ case (when $p^+,q^+<\infty$ and $p,q \in \PPlog$). Note that this corresponds to the usual restrictions $p,q<\infty$ already known for constant exponents.

In the paper \cite{AlmC15b} we give a complete proof of the denseness of $\cS$ in $\A$  by taking advantage of refined results on atomic decompositions. Thus, at least in the cases $p^+,q^+<\infty$, we can then define Fourier multipliers by completion.
\begin{definition}
Let $\w\in\W$ and $p,q\in\PPlog$ with $\max\{p^+,q^+\}<\infty$. Then $m\in\cS'$ is said to be a Fourier multiplier for $\A$ if
\begin{equation*}
\big\|\big(m\,\hat{f}\big)^\vee\,|\A\big\| \lesssim \|f\,|\A\|
\end{equation*}
holds for all $f\in\cS$.
\end{definition}

Notice that this definition is coherent with the interpretation made before when $m\in\cS'$ is, in particular, a $C^\infty$ function with at most polynomial growth as well as all of its derivatives.

Before stating the next result, we observe that there are functions $\lambda_j\in\cS$, $j\in\Nz$, such that
$$
\lambda_0(x)=1 \ \ \text{if} \ \ |x|\le 2 \ \ \ \ \text{and} \ \ \ \ \lambda_0(x)=0 \ \ \text{if} \ \ |x|\ge 4,
$$
$$
\lambda(x):= \lambda_0(x) - \lambda_0(8x), \ \ \ \ \lambda_j(x)=\lambda(2^{-j}x), \ \ \ \ j\in\N, \ \ x\in\Rn,
$$
so that
$$
\lambda_j(x)=1 \ \ \text{if} \ \ 2^{j-1} \le |x|\le 2^{j+1} \ \ \ \ \text{and} \ \ \ \ \lambda_j(x)=0 \ \ \text{if} \ \ |x|\le 2^{j-2} \ \ \text{or} \ \ |x|\ge 2^{j+2}
$$
for $j\in\N$.

Below we use the notation
$$
\|m\,|\,h^{\kappa}_2\|:= \|\lambda_0\,m\,|\,H^{\kappa}_2\| + \sup_{j\in\N} \|\lambda(\cdot)\,m(2^j\cdot)\,|\,H^{\kappa}_2\|,
$$
where
$$
H^{\kappa}_2 =\big\{f\in \cS': \, \|f\,|\,H^{\kappa}_2\| = \big\| (1+|\cdot|^2)^{\kappa/2} \hat{f} \,|\,L_2\big\| < \infty \big\}
$$
stands for the classical Bessel potential space, $\kappa\in\N$. As usual, we write $m\in h^{\kappa}_2$ meaning that $\|m\,|\,h^{\kappa}_2\|<\infty$.

\begin{theorem}
Let $\w\in\W$ and $p,q\in\PPlog$ with $\max\{p^+,q^+\}<\infty$. Let
\begin{equation*}
\kappa > \alpha+\frac{n}{p^-} + \frac{n}{2} +c_{\log}(1/q)\ \ \textrm{(in the $B$ case)} \ \ \ \text{or} \ \ \  \kappa > \alpha+\frac{n}{\min\{p^-,q^-\}} + \frac{n}{2} \ \ \textrm{(in the $F$ case)}.
\end{equation*}
Then there exists $c>0$ such that
\begin{equation*}
\big\|\big(m\,\hat{f}\big)^\vee\,|\A\big\| \leq c\,\|m\,|\,h^{\kappa}_2\|\,\|f\,|\A\|
\end{equation*}
for all $m\in h^{\kappa}_2$ and $f\in\A$.
\end{theorem}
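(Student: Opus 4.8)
The strategy is to run the scheme of the proof of Theorem~\ref{thm:lift}: reduce the assertion to a pointwise bound on $\varphi_j\ast(m\hat f)^\vee$ in terms of a Peetre maximal function of $f$, and then invoke Theorem~\ref{thm:Peetre}. I would begin with two reductions. Since $\kappa>n/2$, the Sobolev embedding $H^\kappa_2\hookrightarrow L_\infty$ applied to $\lambda_0 m$ and to $\lambda(\cdot)m(2^j\cdot)$, $j\in\N$, shows that $m\in L_\infty$ with $\|m\,|\,L_\infty\|\lesssim\|m\,|\,h^\kappa_2\|$; in particular $(m\hat f)^\vee$ is a well-defined function for every $f\in\cS$. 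Moreover, since $\cS$ is dense in $\A$ when $p^+,q^+<\infty$ (as recalled above) and $\A$ is complete, it suffices to prove the inequality for $f\in\cS$; the resulting bounded map $f\mapsto(m\hat f)^\vee$ on $\cS$ then extends uniquely to a bounded operator on $\A$ with the same bound, and this extension is what $(m\hat f)^\vee$ means for $f\in\A$ (consistently with the Definition above and with Theorem~\ref{thm:Fmultiplier1}).

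Fix now $f\in\cS$ and the admissible system $\{\varphi_j\}$ entering the definition of $\|\cdot\,|\,\A\|$. The key observation is that $\lambda_j\equiv 1$ on $\supp\widehat{\varphi}_j$ for every $j\in\Nz$ (with the convention $\varphi_0=\Phi$), so that $\widehat{\varphi}_j\,m=\widehat{\varphi}_j\,(\lambda_j m)$; by the convolution theorem this gives
\[
\varphi_j\ast(m\hat f)^\vee=[\lambda_j m]^\vee\ast(\varphi_j\ast f),\qquad j\in\Nz.
\]
Estimating $|(\varphi_j\ast f)(y)|\le\big(\varphi^\ast_j f\big)_a(x)\,(1+|2^j(x-y)|^{a})$ under the integral sign, we obtain, for any $a>0$,
\[
\big|\varphi_j\ast(m\hat f)^\vee(x)\big|\le\big(\varphi^\ast_j f\big)_a(x)\;J_j,\qquad J_j:=\int_{\Rn}\big|[\lambda_j m]^\vee(y)\big|\,(1+|2^jy|^{a})\,dy,
\]
exactly as in the proof of Theorem~\ref{thm:lift}. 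Thus, after multiplying by $w_j(x)$, everything reduces to showing $J_j\lesssim\|m\,|\,h^\kappa_2\|$ uniformly in $j$.

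To estimate $J_j$ I would use that $\lambda_j(\cdot)=\lambda(2^{-j}\cdot)$ for $j\in\N$: the substitutions $\xi=2^j\eta$ in the inverse Fourier integral and $z=2^jy$ yield $J_j=c\int_{\Rn}\big|[\lambda(\cdot)m(2^j\cdot)]^\vee(z)\big|\,(1+|z|^{a})\,dz$, with the case $j=0$ given by the same formula with $\lambda_0 m$ and no rescaling. Since $(1+|z|^{a})^2(1+|z|^2)^{-\kappa}$ is integrable precisely when $\kappa>a+\tfrac n2$, the Cauchy--Schwarz inequality bounds this integral by a constant times $\big(\int_{\Rn}|[\lambda(\cdot)m(2^j\cdot)]^\vee(z)|^2(1+|z|^2)^{\kappa}\,dz\big)^{1/2}$; expanding $(1+|z|^2)^{\kappa}$ (here $\kappa\in\N$) into a finite combination of monomials $|z^\beta|^2$, $|\beta|\le\kappa$, and using Plancherel together with $\widehat{z^\beta g^\vee}=c_\beta D^\beta g$, this is controlled by $c\,\|\lambda(\cdot)m(2^j\cdot)\,|\,W^{\kappa,2}\|\approx\|\lambda(\cdot)m(2^j\cdot)\,|\,H^\kappa_2\|\le\|m\,|\,h^\kappa_2\|$, uniformly in $j$. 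Plugging this into the previous display and using the lattice property of $\ell_\qx(L_\px)$ (resp.\ $L_\px(\ell_\qx)$) gives
\[
\big\|(m\hat f)^\vee\,|\,\A\big\|\lesssim\|m\,|\,h^\kappa_2\|\;\big\|\big(w_j\big(\varphi^\ast_j f\big)_a\big)_j\,|\,\ell_\qx(L_\px)\ \text{(resp.\ }L_\px(\ell_\qx)\text{)}\big\|.
\]
Since the admissible system $\{\varphi_j\}$ satisfies the hypotheses of Theorem~\ref{thm:Peetre} (this is exactly what underlies Corollary~\ref{cor:independent}), the right-hand maximal-function quasi-norm is $\approx\|f\,|\,\A\|$ as soon as $a>\alpha+\tfrac n{p^-}+c_{\log}(1/q)$ in the $B$ case, resp.\ $a>\alpha+\tfrac n{\min\{p^-,q^-\}}$ in the $F$ case. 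Choosing $a$ just above that threshold, the two requirements $\kappa>a+\tfrac n2$ and the Peetre bound on $a$ can be met simultaneously exactly under the stated hypothesis on $\kappa$, and the theorem follows.

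I expect the main obstacle to be the uniform-in-$j$ estimate $J_j\lesssim\|m\,|\,h^\kappa_2\|$: one must arrange the dyadic rescaling so that the $j$-dependence is absorbed precisely into the quantity $\sup_{j}\|\lambda(\cdot)m(2^j\cdot)\,|\,H^\kappa_2\|$ appearing in $\|m\,|\,h^\kappa_2\|$, and convert the weighted $L_1$-bound into an $L_2$-Sobolev estimate without spoiling that uniformity. The remaining ingredients — the bound $m\in L_\infty$, the density of $\cS$ in $\A$ for $p^+,q^+<\infty$, the fact that the admissible system fits Theorem~\ref{thm:Peetre}, and the bookkeeping matching the thresholds on $\kappa$ and $a$ — are routine.
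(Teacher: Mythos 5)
Your proposal is correct and follows essentially the same route as the paper's own proof: the identity $\widehat{\varphi}_j m=\widehat{\varphi}_j(\lambda_j m)$, the Peetre maximal estimate reducing everything to a uniform bound on $J_j$, the dyadic rescaling $z=2^jy$ followed by Cauchy--Schwarz and Plancherel to produce $\sup_j\|\lambda(\cdot)m(2^j\cdot)\,|\,H^\kappa_2\|$, and the final appeal to Theorem~\ref{thm:Peetre} with $a$ chosen just below $\kappa-\tfrac n2$. The only (harmless) differences are your explicit preliminary remarks on $m\in L_\infty$ and density, and the unnecessary detour through $W^{\kappa,2}$ where Plancherel applied directly to the weight $(1+|z|^2)^{\kappa}$ already gives the $H^\kappa_2$ norm.
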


\begin{proof}
It is enough to prove the claim for all $f\in\cS$ (and all $m\in h^{\kappa}_2$). We try to adapt the proof of Theorem~\ref{thm:lift} to our present context, using now the functions $\lambda_j$ instead of $\Theta_j$ over there.  Let $\{\varphi_j\}$ be an admissible system. Noting that $\lambda_j \equiv 1$ on support of $\hat{\varphi}_j$ and that $(\lambda_j(\cdot)\,m)^\vee$ is a entire analytic function of at most polynomial growth (by the Paley-Wiener-Schwartz theorem), for each $j\in\Nz$ and $x\in\Rn$, we have
$$
(2\pi)^{\frac{n}{2}}\, \left| \Big( \varphi_j \ast \big(m\,\hat{f}\big)^\vee \Big)(x)\right|  \le  \big( \varphi^\ast_j f\big)_a(x) \; \int_{\Rn} \big(1+|2^jy|^a\big)\, \left|[\lambda_j(\cdot)\,m]^\vee(y)\right|\, dy
$$
with $a>0$. We need to control appropriately the integral above. After the change of variables given by $2^jy=z$, an application of Schwarz's inequality yields
\begin{eqnarray*}
& & \int_{\Rn} (1+|z|^a)\, \left|[\lambda_j(\cdot)\,m]^\vee(2^{-j}z)\right|\,2^{-jn}\,dz \, \le \quad\quad\quad \\
& \le & c_1 \int_{\Rn} (1+|z|^2)^{\frac{1}{2} (a+\frac{n+\varepsilon}{2})}\, \left|[\Lambda_j(\cdot)\,m(2^j \cdot)]^{\wedge}(-z)\right|\,(1+|z|)^{-\frac{n+\varepsilon}{2}}\,dz \\
& \le & c_2 \left(\int_{\Rn} (1+|z|^2)^{a+\frac{n+\varepsilon}{2}}\, \left|[\Lambda_j(\cdot)\,m(2^j \cdot)]^{\wedge}(z)\right|^2\,dz\right)^{1/2}
\end{eqnarray*}
with $c_2>0$ depending only on $a$, $n$ and $\varepsilon$ ($\varepsilon>0$ at our disposal), where we used $\Lambda_j:=\lambda_j(2^j\cdot)$, $j\in\Nz$.
By Plancherel's theorem we further estimate the right-hand side of the inequality above by
$$
c_2 \left\| \Lambda_j(\cdot)\,m(2^j \cdot) \,|\, H_2^{a+\frac{n+\varepsilon}{2}} \right\| = c_2 \left\| \Lambda_j(\cdot)\,m(2^j \cdot) \,|\, H_2^{\kappa} \right\|
\le c_2 \left\| m \,|\, h_2^{\kappa} \right\|,
$$
by choosing $\varepsilon>0$ such that
$$
\kappa=\alpha +\frac{n}{p^-} +\frac{\varepsilon}{2} + \frac{n+\varepsilon}{2} +c_{\log}(1/q)\ \ \ \text{and} \ \ \ a=\alpha+ \frac{n}{p^-} +\frac{\varepsilon}{2} +c_{\log}(1/q)
$$
in the $B$ case, or
$$
\kappa=\alpha + \frac{n}{\min\{p^-,q^-\}} +\frac{\varepsilon}{2} + \frac{n+\varepsilon}{2} \ \ \ \text{and} \ \ \ a=\alpha + \frac{n}{\min\{p^-,q^-\}} +\frac{\varepsilon}{2}
$$
in the $F$ case, which is clearly possible by the assumption on $\kappa$. Putting everything together, we obtain the estimate
$$\left| \Big( \varphi_j \ast \big(m\,\hat{f}\big)^\vee \Big)(x)\right| \lesssim \left\| m \,|\, h_2^{\kappa} \right\| \, \big( \varphi^\ast_j f\big)_a(x)\,, \ \ \ j\in\Nz, \ \ x\in\Rn,
$$
Therefore, the desired inequality follows from this, the lattice property of the mixed sequence spaces and Theorem~\ref{thm:Peetre}.
\end{proof}

Similar results for the particular case $w_j(x)=2^{js(x)}$ are given in \cite{Noi14}, although our assumptions on $\kappa$ are better. Corresponding statements for classical spaces $A^s_{p,q}$ (then with even better assumptions on $\kappa$) can be seen in \cite[p.~117]{Tri83}, \cite[Proposition~1.19]{Tri13}, though with different restrictions.

Since the Triebel-Lizorkin scale includes Bessel potential spaces, in particular we have the following Fourier multipliers result:

\begin{corollary}
Let $s\in[0,\infty)$ and $p\in\PPlog$ with $1<p^-\leq p^+<\infty$. If $\kappa > \frac{n}{\min\{p^-,2\}} + \frac{n}{2}$, then there exists $c>0$ such that
\begin{equation*}
\big\|\big(m\,\hat{f}\big)^\vee\,|\,\mathcal{L}^{s,\px}\big\| \leq c\,\|m\,|\,h^{\kappa}_2\|\,\|f\,|\,\mathcal{L}^{s,\px}\|
\end{equation*}
for all $m\in h^{\kappa}_2$ and $f\in\mathcal{L}^{s,\px}$.\\
In particular, for any $k\in\Nz$ we have
\begin{equation*}
\big\|\big(m\,\hat{f}\big)^\vee\,|\,W^{k,\px}\big\| \leq c\,\|m\,|\,h^{\kappa}_2\|\,\|f\,|\,W^{k,\px}\|
\end{equation*}
for all $m\in h^{\kappa}_2$ and $f\in W^{k,\px}$.
\end{corollary}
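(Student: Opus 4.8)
The plan is to deduce both inequalities from the preceding theorem by means of the known identifications of the variable exponent Bessel potential and Sobolev spaces with spaces from the Triebel--Lizorkin scale. Recall from Section~\ref{sec:intro} that for $s\in[0,\infty)$ and $p\in\PPlog$ with $1<p^-\le p^+<\infty$ one has $\mathcal{L}^{s,\px}=F^{s}_{\px,2}$ with equivalent (quasi-)norms, where $F^{s}_{\px,2}$ denotes the space $\F$ corresponding to the weight sequence $w_j(x)\equiv 2^{js}$ and the constant exponent $q(\cdot)\equiv 2$. By Example~\ref{ex:gen-smoothness} (take $\sigma_j=2^{js}$, so that one may choose $d_1=d_2=2^s$) -- or, equivalently, by Example~\ref{ex:weight-function} with $\rho\equiv 1$ -- this weight sequence belongs to $\mathcal{W}^{0}_{s,s}$; hence in the notation of the theorems above we have $\alpha=0$ and $\alpha_1=\alpha_2=s$.

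First I would verify that the hypotheses of the last theorem are met in this situation: one has $p,q\in\PPlog$ with $\max\{p^+,q^+\}<\infty$ (indeed $q\equiv 2$ and $p^+<\infty$), and since $q$ is constant, $c_{\log}(1/q)=0$. Consequently the requirement on $\kappa$ in the $F$ case reduces to
\[
\kappa>\alpha+\frac{n}{\min\{p^-,q^-\}}+\frac{n}{2}=\frac{n}{\min\{p^-,2\}}+\frac{n}{2},
\]
which is precisely the assumption of the corollary. The theorem then yields a constant $c>0$ with
\[
\big\|\big(m\,\hat{f}\big)^\vee\,|\,F^{s}_{\px,2}\big\|\le c\,\|m\,|\,h^{\kappa}_2\|\,\|f\,|\,F^{s}_{\px,2}\|
\]
for all $m\in h^{\kappa}_2$ and all $f\in F^{s}_{\px,2}$. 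Passing to the equivalent norm $\|\cdot\,|\,\mathcal{L}^{s,\px}\|$ on both sides (at the cost of a larger constant) gives the first asserted inequality.

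For the second inequality I would invoke the classical fact, due to Almeida and Samko and recalled in Section~\ref{sec:intro}, that for integer $k\in\Nz$ and $p\in\PPlog$ with $1<p^-\le p^+<\infty$ one has $W^{k,\px}=\mathcal{L}^{k,\px}$ with equivalent norms. Applying the first part of the corollary with $s=k$ and once more replacing a norm by an equivalent one gives the estimate on $W^{k,\px}$.

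I do not expect a genuine obstacle here: the argument is essentially bookkeeping. The only points requiring some care are (i) checking that the constant-in-$x$ weight $(2^{js})_j$ does lie in $\mathcal{W}^{0}_{s,s}$, so that $\alpha=0$ and the condition on $\kappa$ collapses to exactly the one stated; and (ii) quoting the two identification results under precisely the range of exponents assumed in the corollary ($s\ge 0$, $p\in\PPlog$, $1<p^-\le p^+<\infty$), keeping in mind that $\cS$ is dense in these spaces, so that the notion of Fourier multiplier used in the last theorem indeed applies to the present $f$.
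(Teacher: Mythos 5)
Your proposal is correct and follows exactly the route the paper intends: the corollary is the specialization of the preceding Fourier multiplier theorem to the $F$-case with the constant weight $w_j(x)=2^{js}\in\mathcal{W}^{0}_{s,s}$ and $q\equiv 2$, combined with the identifications $\mathcal{L}^{s,p(\cdot)}=F^{s}_{p(\cdot),2}$ and $W^{k,p(\cdot)}=\mathcal{L}^{k,p(\cdot)}$ recalled in the introduction. Your verification that $\alpha=0$ and $\min\{p^-,q^-\}=\min\{p^-,2\}$ reduce the hypothesis on $\kappa$ to the stated one is exactly the required bookkeeping.
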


\end{document}